\newtheorem{assumption}{Assumption A.\!\!}
\newcommand{\aref}[1]{\textbf{\textbf{A.\ref{#1}}}}
\newcounter{algorithmenumi}
\newenvironment{algorithm}[1]
{\noindent\textbf{Algorithm }\refstepcounter{algorithmenumi}\textbf{\arabic{algorithmenumi}}. (#1).}{\textbf{End.}}
\newcommand{\RR}{\mathbb{R}}
\newcommand{\abs}[1]{\left\vert#1\right\vert}
\newcommand{\set}[1]{\left\{#1\right\}}
\newcommand{\norm}[1]{\Vert#1\Vert}
\newcommand{\dnorm}[1]{\vert\!\Vert#1\Vert\!\vert}
\newcommand{\eofproof}{\hfill $\square$}
\begin{document}

\title{Path-Following Gradient-Based Decomposition  Algorithms For Separable Convex Optimization}
% \subtitle{Do you have a subtitle?\\ If so, write it here}

%\titlerunning{Short form of title}        % if too long for running head

\author{Quoc Tran Dinh \and Ion Necoara \and Moritz Diehl}

%\authorrunning{Short form of author list} % if too long for running head

\institute{Q. Tran Dinh and M. Diehl \at Optimization in Engineering Center (OPTEC) and Department of Electrical Engineering,
           Katholieke Universiteit Leuven, Belgium,
       \email{\{quoc.trandinh, moritz.diehl\}@esat.kuleuven.be}
        \and
           I. Necoara \at Automation and Systems Engineering Department, University Politehnica Bucharest,
           060042 Bucharest, Romania, \email{ion.necoara@acse.pub.ro}
    \and
           Q. Tran Dinh \at Department of Mathematics-Mechanics-Informatics, Vietnam National University, Hanoi, Vietnam.
}

\date{Received: date / Accepted: date}
% The correct dates will be entered by the editor

\maketitle

%%%%%%%%%%%%%%%%%%%%%%%%%%%%%%%%%%%%%%%%%%%%%%%%%%%%%%%%%%%%%%%%%%%%%%%%%%%%%%%%%
%+ Abstract.
%%%%%%%%%%%%%%%%%%%%%%%%%%%%%%%%%%%%%%%%%%%%%%%%%%%%%%%%%%%%%%%%%%%%%%%%%%%%%%%%%
\begin{abstract}
A new decomposition optimization algorithm, called \textit{path-following gradient-based decomposition}, is proposed to solve separable convex  optimization
problems. Unlike path-following Newton methods considered in the literature, this algorithm does not requires any smoothness assumption on the objective
function. This allows us to handle more general classes of problems arising in many real applications than in the path-following Newton methods. The new
algorithm is a combination of three techniques, namely smoothing, Lagrangian decomposition and path-following gradient framework. 
The algorithm  decomposes the original problem into smaller subproblems by using dual decomposition and smoothing via self-concordant barriers, updates
the dual variables using a path-following gradient method and allows one to solve the subproblem in parallel.
Moreover, the algorithmic parameters are updated automatically without any tuning strategy as in augmented Lagrangian approaches. 
We prove the global convergence of the new algorithm and analyze its local convergence rate. 
Then, we modify the proposed algorithm by applying Nesterov's accelerating scheme to get a new variant which has a better local convergence rate.
Finally, we present preliminary numerical tests that confirm the theory development.

\keywords{Path-following gradient method \and dual fast gradient algorithm \and separable convex optimization \and smoothing technique \and self-concordant
barrier \and parallel implementation.}
% \PACS{PACS code1 \and PACS code2 \and more}
% \subclass{MSC code1 \and MSC code2 \and more}
\end{abstract}

%%%%%%%%%%%%%%%%%%%%%%%%%%%%%%%%%%%%%%%%%%%%%%%%%%%%%%%%%%%%%%%%%%%%%%%%%%%%%%%%%
%+ 1. Introduction.
%%%%%%%%%%%%%%%%%%%%%%%%%%%%%%%%%%%%%%%%%%%%%%%%%%%%%%%%%%%%%%%%%%%%%%%%%%%%%%%%%
\section{Introduction}\label{sec:intro}
Many optimization problems arising in engineering   and economics can conveniently be formulated  as \textit{Separable Convex Programming Problems} (SepCPs).
Particularly, optimization problems related to a network $\mathcal{N}(V, E)$ of $M$ agents, where $V$ denotes the set of nodes and $E$ denotes the set of edges
in the network, can be cast into  separable convex optimization problems. 
Several applications can be found in the literature such as distributed control, network utility maximization, resource allocation, machine learning and
multistage stochastic convex programming \cite{Bertsekas1989,Boyd2011,Palomar2006,Xiao2004,Zhao2005}.
Problems of moderate size or possessing a sparse structure  can be solved by standard optimization methods in a centralized setup. However, in many real
applications we meet problems, which may not be suitable to solve by standard optimization approaches or exploiting problem structures, e.g. nonsmooth separate
objective functions, dynamic structure or distributed information. In those situations, decomposition methods can be considered as an appropriate framework to
tackle those problems. Particularly, Lagrangian dual decomposition techniques are widely used to decompose a large-scale separable convex optimization problem
into smaller subproblem components, which can simultaneously be solved in a \textit{parallel manner} or in a \textit{closed form}.

Various approaches have  been proposed to solve (SepCP) in decomposition framework. 
One class of algorithms is based on Lagrangian relaxation and subgradient-type methods of multipliers \cite{Bertsekas1989,Duchi2012,Nedic2009}. 
It has been observed that subgradient methods are usually slow  and numerically sensitive to the choice of step sizes in practice \cite{Nesterov2004}. 
The second approach relies on augmented Lagrangian functions, see e.g. \cite{Hamdi2005,Ruszczynski1995}. 
Many variants were proposed to process the inseparability of the crossproduct terms in the augmented Lagrangian function in
different ways. Another research direction is based on alternating direction methods which were studied, for example, in \cite{Boyd2011,He2011a}. 
Alternatively, proximal point-type methods were extended to the decomposition framework, see, e.g. \cite{Chen1994,Necoara2008}. 
Other researchers employed interior point methods in the framework of decomposition such as \cite{Kojima1993,Necoara2009a,TranDinh2012c,Zhao2005}.

In this paper, we follow the same line of the dual decomposition framework but in a different way. 
First, we smooth the dual function by using self-concordant barriers. 
By an appropriate choice of the smoothness parameter, we show that the dual function of the smoothed problem is an approximation of the
original dual function. 
Then, we develop a new path-following gradient method for solving the smoothed dual problem. By  strong duality, we can also recover an approximate solution for
the original problem. 
Compared to the previous related methods mentioned above, the new approach has the following advantages. 
First, since a self-concordant barrier function only depends on its barrier parameter, this allows us to avoid a dependency on the diameter of the feasible set
as in prox-function smoothing techniques \cite{Necoara2008,TranDinh2012c}.
Second, the proposed method is a gradient-type scheme which allows to handle more general classes of  problems than in path-following Newton methods
\cite{Necoara2009,TranDinh2012c,Zhao2005}, in particular the  nonsmoothness of the objective function. 
Third, by smoothing via self-concordant barrier functions, if the objective function is smooth then instead of solving the primal subproblems as general convex
programs we can treat them by using optimality conditions which are equivalent to solving nonlinear systems. 
Finally, by convergence analysis, we provide an adaptive update for all the algorithmic parameters which still ensure the convergence of the new methods.

%+ A. Contribution
\vskip0.2cm
\noindent\textit{Contribution. }
The contribution of the paper can be summarized as follows:
\begin{itemize}
\item[(a)] We propose to use a smoothing technique via  barrier function to smooth the dual function of (SepCP) as in \cite{Kojima1993,Necoara2009,Zhao2005}.
However, we provide a new estimate for the dual function, see Lemma \ref{le:barrier_smooth}.

\item[(b)] We propose a new path-following gradient-based decomposition algorithm, Algorithm \ref{alg:gradient_method}, to solve (SepCP). 
This algorithm  allows one to solve the subproblem of each component in parallel. Moreover, all the algorithmic parameters are updated automatically without
using any tuning strategy.

\item[(c)] We prove the convergence of the algorithm and estimate its local convergence rate. 

\item[(d)] We modify the algorithm by applying Nesterov's accelerating scheme to obtain a new variant, Algorithm \ref{alg:fast_grad_alg}, which possesses a
convergence rate, i.e. $O(1/\varepsilon)$, where $\varepsilon$ is a given accuracy.
\end{itemize}
Let us emphasize the following points.
The new estimate of the dual function considered in this paper is different from the one in \cite{TranDinh2012c} which does not depend on the diameter of the
feasible set of the dual problem.
The worst case complexity of the second algorithm is $O(1/\varepsilon)$ which is much higher than in subgradient-type methods of multipliers
\cite{Bertsekas1989,Duchi2012,Nedic2009}. We notice that this convergence rate is optimal in the sense of Nesterov's optimal schemes
\cite{Nesterov2004,Nesterov2005} for this class of algorithms.
Moreover, we can choose smoothness parameter to adjust this convergence rate.
All the algorithms can be implemented in a \textit{parallel manner}.

%+ B. Outline.
\vskip0.2cm
\noindent\textit{Outline. }
The rest of this paper is organized as follows. 
In the next section, we state the problem formulation and  review the Lagrangian dual decomposition framework.
Section \ref{sec:dual_smooth} considers a smoothing technique via self-concordant barriers and provides an estimate for the dual function. The new 
algorithms and
their convergence analysis are presented in Sections \ref{sec:gradient_method} and \ref{sec:fast_grad_method}. 
Preliminarily numerical results are shown in the last section to verify our theoretical results.

%+ C. Notation and Terminology.
\vskip0.2cm
\noindent\textit{Notation and Terminology. }
Throughout the paper,  we  work on the Euclidean  space $\RR^n$ endowed with an  inner product $x^Ty$ for $x, y \in \RR^n$ and the norm
$\norm{x}_2 := \sqrt{x^Tx}$. For a proper, lower semi-continuous convex function $f$, $\partial{f}(x)$ denotes the subdifferential of $f$ at $x$. 
If $f$ is concave then we also use $\partial{f}(x)$ for its super-differential at $x$. 
For any $x \in \textrm{dom}(f)$ such that $\nabla^2f(x)$ is positive definite, the local norm of a vector $u$ with respect to $f$ at $x$ is defined as
$\norm{u}_x := \left[u^T\nabla^2f(x)u\right]^{1/2}$ and its dual norm is $\norm{u}_x^{*} := \max\set{u^Tv ~|~ \norm{v}_x \leq 1} =
\left[u^T\nabla^2f(x)^{-1}u\right]^{1/2}$. It is obvious that $u^Tv \leq \norm{u}_x\norm{v}_x^{*}$. The notation $\RR_{+}$ and $\RR_{++}$ define
the sets of nonnegative and positive numbers, respectively. The function $\omega:\RR_{+}\to \RR$ is defined by $\omega(t) := t - \ln(1+t)$ and
its dual function $\omega_{*} : [0, 1)\to\RR$ is  $\omega_{*}(t) := -t - \ln(1-t)$.

%%%%%%%%%%%%%%%%%%%%%%%%%%%%%%%%%%%%%%%%%%%%%%%%%%%%%%%%%%%%%%%%%%%%%%%%%%%%%%%%%
%+ 2. Problem Formulation and Motivating Examples.
%%%%%%%%%%%%%%%%%%%%%%%%%%%%%%%%%%%%%%%%%%%%%%%%%%%%%%%%%%%%%%%%%%%%%%%%%%%%%%%%%
\section{Separable Convex Programming Problems and Lagrangian Dual Decomposition}\label{sec:prob_state} 
A \textit{Separable Convex Programming} problem (SepCP) is typically written as follows:
\begin{equation}\label{eq:sep_cp}
\phi^{*} := \left\{\begin{array}{cl}
\displaystyle\max_{x} & \phi(x) := \displaystyle\sum_{i=1}^N\phi_i(x_i),\\
\mathrm{s.t.} &\displaystyle\sum_{i=1}^N(A_ix_i - b_i) = 0,\\
&x_i\in X_i, ~i=1,\cdots, N,
\end{array}\right.\tag{\textrm{SepCP}}
\end{equation}
where the decision variable $x := (x_1, \dots, x_N)$ with
$x_i\in\RR^{n_i}$, the function $\phi_i:\RR^{n_i} \to \RR$ is concave and the feasible set is described by the set $X := X_1\times
\cdots \times X_N$, with $X_{i}\in\RR^{n_i}$  nonempty, closed, convex sets for all $i=1,\cdots, N$. Matrix $A := [A_1,\dots, A_N]$, with
$A_{i}\in\RR^{m\times n_i}$ for $i=1,\dots, N$, $b := \sum_{i=1}^Nb_i \in\RR^m$ and $n_1 + \cdots + n_N = n$. The constraint $Ax - b = 0$ in
\eqref{eq:sep_cp} is called \textit{coupling linear constraint}, while $x_i\in X_i$ is  referred to as \textit{local constraints} of the $i$-th component
(agent).

Let $\mathcal{L}(x, y) := \phi(x) + y^T(Ax - b)$ be the partial Lagrangian function associated with the coupling constraint $Ax - b = 0$ of \eqref{eq:sep_cp}.
The dual problem of \eqref{eq:sep_cp} is written as:
\begin{equation}\label{eq:dual_prob}
g^{*} := \min_{y\in\RR^m} g(y),
\end{equation}
where $g$ is the dual function defined by:
\begin{equation}\label{eq:dual_func}
g(y) := \max_{x\in X}\mathcal{L}(x, y) = \max_{x\in X}\left\{\phi(x) + y^T(Ax-b)\right\}.
\end{equation}
Due to the separability of $\phi$, the dual function $g$ can be computed \textit{in parallel} as:
\begin{equation}\label{eq:dual_func}
g(y) = \sum_{i=1}^Ng_i(y), ~~ g_i(y) := \max_{x_i\in X_i}\left\{\phi_i(x_i) + y^T(A_ix_i - b_i)\right\}, ~i=1,\dots, N.
\end{equation}
Throughout this paper, we make the following assumptions:

%%%+ Assumption A.1.
\begin{assumption}\label{as:A1}
The following assumptions hold, see \cite{Ruszczynski1995}:
\begin{itemize}
\item[$\mathrm{(a)}$] The solution set $X^{*}$ of \eqref{eq:sep_cp} is nonempty.
\item[$\mathrm{(b)}$] Either $X$ is polyhedral or the following Slater qualification condition holds:
\begin{equation}\label{eq:slater_cond}
\mathrm{ri}(X)\cap\left\{x~|~ Ax - b = 0\right\} \neq\emptyset,
\end{equation}
where $\mathrm{ri}(X)$ is the relative interior of $X$.
\item[$\mathrm{(c)}$] The functions $\phi_i$, $i=1,\dots, N$, are proper, upper semicontinuous and concave and $A$ is full-row rank.
\end{itemize}
\end{assumption}
Assumption \aref{as:A1} is standard in convex optimization. Under this assumption, \textit{strong duality} holds, i.e. the dual problem \eqref{eq:dual_prob} is
also solvable and  $g^{*} = \phi^{*}$. Moreover, the set of Lagrange multipliers, $Y^{*}$, is bounded. 
However, under Assumption \aref{as:A1}, the dual function $g$ may not be differentiable. 
Numerical methods such as subgradient-type and bundle methods can be used to solve \eqref{eq:dual_prob}. 
Nevertheless, these methods are in general numerically intractable and slow.

%%%%%%%%%%%%%%%%%%%%%%%%%%%%%%%%%%%%%%%%%%%%%%%%%%%%%%%%%%%%%%%%%%%%%%%%%%%%%%%%%
%+ 3. Smoothing via self-concordant barrier functions
%%%%%%%%%%%%%%%%%%%%%%%%%%%%%%%%%%%%%%%%%%%%%%%%%%%%%%%%%%%%%%%%%%%%%%%%%%%%%%%%%
\section{Smoothing via self-concordant barrier functions}\label{sec:dual_smooth}
In many practical problems, the feasible sets $X_i$, $i=1,\dots, N$ are usually  simple,  e.g.  box, polyhedra and ball.  Hence, $X_i$ can be  endowed with a
\textit{self-concordant barrier} (see, e.g. \cite{Nesterov1994,Nesterov2004}) as in the following assumption.

%+ Assumption A.2.
\begin{assumption}\label{as:A2}
Each feasible set $X_i$, $i=1,\dots, N$, is bounded and endowed with a self-concordant barrier function $F_i$  with the parameter $\nu_i>0$.
\end{assumption}
Note that the assumption on the boundedness of $X_i$ can be removed by assuming that the set of sample points  generated by the new algorithm described  below
is bounded.

%+ Remark 2.1.
\begin{remark}\label{re:as_A2}
The theory developed in this paper can be easily extended to the case $X_i$ given as follows (see \cite{Nesterov2011}) for some $i\in\set{1,\cdots, N}$:
\begin{equation}\label{eq:X_i}
X_i := X^c_i \cap X_i^a, ~~ X_i^a := \left\{x_i\in\RR^{n_i} ~:~ D_ix_i = d_i\right\},
\end{equation}
by applying the standard linear algebra routines, where the set $X_i^c$ has nonempty interior  and associated with  a $\nu_i$-self-concordant barrier $F_i$.
\end{remark}

Let us denote by $x_i^c$  the analytic center of $X_i$, i.e.:
\begin{equation}\label{eq:analytic_center}
x_i^c := \arg\!\!\!\!\min_{x_i \in \mathrm{int}(X_i)}F_i(x_i) ~ \forall i=1,\dots,N,
\end{equation}
where $\mathrm{int}(X_i)$ is the interior of $X_i$.
Since $X_i$ is bounded, $x^c_i$ is well-defined \cite{Nesterov2004}.
Moreover, the following estimates hold:
\begin{equation}\label{eq:F_upper}
F_i(x_i) - F_i(x_i^c) \geq \omega(\norm{x_i-x_i^c}_{x_i^c}) ~\textrm{and}~ \norm{x_i - x_i^c}_{x_i^c} \leq \nu_i + 2\sqrt{\nu_i}, ~~\forall x_i \in X_i,
~i=1,\dots, N.
\end{equation}
Without loss of generality, we can assume that $F_i(x^c_i) = 0$. Otherwise, we can replace $F_i$ by $\tilde{F}_i(\cdot) := F_i(\cdot) - F_i(x^c_i)$ for
$i=1,\dots,N$.
Since $X$ is separable, $F := \sum_{i=1}^NF_i$ is a self-concordant barrier of $X$ with the parameter $\nu := \sum_{i=1}^N\nu_i$.

Let us define the following function::
\begin{equation}\label{eq:g_yt}
g(y; t) :=  \sum_{i=1}^Ng_i(y;t),
\end{equation}
where 
\begin{equation}\label{eq:g_yt_i}
g_i(y; t) := \max_{x_i\in\textrm{int}(X_i)}\left\{ \phi_i(x_i) + y^T(A_ix_i - b_i) - tF_i(x_i)\right\},~ i=1,\dots, N, 
\end{equation}
with $t > 0$ being referred to as a smoothness parameter. 
Note that the maximum problem in \eqref{eq:g_yt_i} has a unique optimal solution, which is denoted by $x_i^{*}(y;t)$, due to the strict concavity of the
objective function. We call this problem the \textit{primal subproblem}. 
Consequently, the functions $g_i(\cdot,t)$ and $g(\cdot,t)$ are well-defined and smooth on $\RR^m$ for any $t > 0$. 
We call $g_i(\cdot; t)$ and $g(\cdot;t)$ the \textit{smoothed dual function} of $g_i$ and $g$, respectively.

The optimality condition for \eqref{eq:g_yt_i} is:
\begin{equation}\label{eq:optimality_g_yt}
0 \in \partial \phi_i(x_i^{*}(y;t)) + A_i^Ty - t\nabla F_i(x_i^{*}(y;t)), ~i=1,\cdots, N.
\end{equation}
Let us define the full optimal solution $x^{*}(y;t) :=
(x^{*}_1(y;t),\cdots, x^{*}_N(y;t))$. Since problem \eqref{eq:g_yt_i} is convex, this condition \eqref{eq:optimality_g_yt} is necessary and sufficient for
optimality. Moreover, the gradients of $g_i(\cdot;t)$ and $g(\cdot;t)$ are given by:
\begin{equation}\label{eq:grad_g}
\nabla{g}_i(y;t) = A_ix^{*}_i(y;t) - b_i,~~ \nabla{g}(y;t) = Ax^{*}(y;t) - b.
\end{equation}
If $f_i$ is differentiable for some $i\in\set{1,\cdots, N}$ then the condition \eqref{eq:optimality_g_yt} collapses to $\nabla{\phi}_i(x_i^{*}(y;t)) + A_i^Ty -
t\nabla F_i(x_i^{*}(y;t)) = 0$, which is indeed a \textit{system of nonlinear equations}.
First, we prove that $g(\cdot; t)$ is an approximation of the dual function $g(\cdot)$ for sufficiently small $t>0$.

%+ Lemma 3.2.
\begin{lemma}\label{le:barrier_smooth}
Suppose that Assumptions \aref{as:A1} and \aref{as:A2} are satisfied.
Let $\bar{x}$ be a strictly feasible point to \eqref{eq:sep_cp}, i.e.  $\bar{x}\in \mathrm{int}(X)\cap\{x~|~Ax = b\}$. Then, for any $t>0$, we have:
\begin{equation}\label{eq:first_est}
g(y)-\phi(\bar{x}) \geq 0 ~~ \textrm{and} ~~ g(y;t) - \phi(\bar{x}) + tF(\bar{x}) \geq 0.
\end{equation}
Moreover, it holds that:
\begin{align}\label{eq:g_est}
g(y;t)  \leq g(y) \leq g(y;t) + t(\nu + F(\bar{x}))  + 2\sqrt{t\nu}\left[g(y;t)+tF(\bar{x})-\phi(\bar{x})\right]^{1/2}.
\end{align}
\end{lemma}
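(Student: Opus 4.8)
The plan is to read \eqref{eq:first_est} and the left inequality of \eqref{eq:g_est} off the definitions, and to obtain the right inequality of \eqref{eq:g_est} by perturbing a maximizer of $\mathcal{L}(\cdot,y)$ along the segment toward the strictly feasible point $\bar{x}$, controlling the barrier by a standard self-concordance estimate, and finishing with a scalar minimization. For \eqref{eq:first_est}: since $A\bar{x}=b$ we have $\mathcal{L}(\bar{x},y)=\phi(\bar{x})$, hence $g(y)=\max_{x\in X}\mathcal{L}(x,y)\ge\phi(\bar{x})$; and since $\bar{x}\in\mathrm{int}(X)$ is admissible in the maximization defining $g(\cdot;t)$ in \eqref{eq:g_yt_i}, we get $g(y;t)\ge\phi(\bar{x})+y^T(A\bar{x}-b)-tF(\bar{x})=\phi(\bar{x})-tF(\bar{x})$. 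For the left inequality of \eqref{eq:g_est}, observe that $F=\sum_{i=1}^NF_i$ attains its minimum value $0$ on $\mathrm{int}(X)$ (by the normalization $F_i(x_i^c)=0$), hence $F\ge0$ there, so that $g(y;t)=\max_{x\in\mathrm{int}(X)}\{\mathcal{L}(x,y)-tF(x)\}\le\max_{x\in X}\mathcal{L}(x,y)=g(y)$.

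For the right inequality I would first record the barrier growth bound: for every $x\in X$ and $\alpha\in[0,1)$, the point $x_\alpha:=\bar{x}+\alpha(x-\bar{x})$ lies in $\mathrm{int}(X)$ and $F(x_\alpha)\le F(\bar{x})+\nu\ln\tfrac{1}{1-\alpha}$. Indeed, $\tfrac{d}{ds}F(x_s)=\nabla F(x_s)^T(x-\bar{x})=\tfrac{1}{1-s}\nabla F(x_s)^T(x-x_s)\le\tfrac{\nu}{1-s}$, where the inequality is the property $\nabla F(z)^T(w-z)\le\nu$ of a $\nu$-self-concordant barrier \cite{Nesterov2004} with $z=x_s\in\mathrm{int}(X)$ and $w=x\in X$ (extended to the boundary of $X$ by continuity), and integrating over $s\in[0,\alpha]$ gives the claim. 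Now pick $\hat{x}\in X$ maximizing $\mathcal{L}(\cdot,y)$ over the compact set $X$ (it exists because $\phi$ is upper semicontinuous), so $g(y)=\mathcal{L}(\hat{x},y)$ and $\phi(\hat{x})$ is finite, and set $x_\alpha:=\bar{x}+\alpha(\hat{x}-\bar{x})$ for $\alpha\in(0,1)$. Admissibility of $x_\alpha$ for $g(\cdot;t)$, concavity of $\phi$, linearity of $y^T(A\cdot-b)$, and $A\bar{x}=b$ give $g(y;t)\ge\phi(x_\alpha)+y^T(Ax_\alpha-b)-tF(x_\alpha)\ge(1-\alpha)\phi(\bar{x})+\alpha\,g(y)-tF(x_\alpha)$. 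Rearranging, substituting $F(x_\alpha)\le F(\bar{x})+\nu\ln\tfrac{1}{1-\alpha}$ and then $\ln\tfrac{1}{1-\alpha}\le\tfrac{\alpha}{1-\alpha}$, and writing $D:=g(y;t)+tF(\bar{x})-\phi(\bar{x})$ (which is $\ge 0$ by \eqref{eq:first_est}), a direct computation yields $g(y)\le\phi(\bar{x})+D+\tfrac{1-\alpha}{\alpha}D+\tfrac{t\nu}{1-\alpha}$ for every $\alpha\in(0,1)$.

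It remains to optimize over $\alpha$: with $u:=\tfrac{1-\alpha}{\alpha}>0$ and $\tfrac{1}{1-\alpha}=1+\tfrac1u$ the bound reads $g(y)\le\phi(\bar{x})+D+t\nu+(uD+\tfrac{t\nu}{u})$, and since $\inf_{u>0}(uD+\tfrac{t\nu}{u})=2\sqrt{t\nu D}$ (attained at $u=\sqrt{t\nu/D}$ when $D>0$, reached as $u\to\infty$ when $D=0$) we get $g(y)\le\phi(\bar{x})+D+t\nu+2\sqrt{t\nu D}$. Since $\phi(\bar{x})+D=g(y;t)+tF(\bar{x})$ and $\sqrt{t\nu D}=\sqrt{t\nu}\,[g(y;t)+tF(\bar{x})-\phi(\bar{x})]^{1/2}$, this is exactly the right inequality in \eqref{eq:g_est}. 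The step I expect to be the main obstacle is the barrier growth bound: it rests on the non-elementary inequality $\nabla F(z)^T(w-z)\le\nu$ for $\nu$-self-concordant barriers and on controlling the perturbation uniformly even when $\hat{x}$ lies on the boundary of $X$, where $F=+\infty$ --- which is precisely why the perturbation is taken along the segment from the interior point $\bar{x}$, keeping $x_\alpha$ in $\mathrm{int}(X)$ for $\alpha<1$. Everything else is routine algebra together with the scalar AM--GM inequality.
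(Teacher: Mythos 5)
Your proof is correct and follows essentially the same route as the paper: perturb the maximizer of $\mathcal{L}(\cdot,y)$ along the segment toward $\bar{x}$, invoke the self-concordant barrier growth bound $F(x_\alpha)\le F(\bar{x})-\nu\ln(1-\alpha)$ (the paper cites it as \cite[2.3.3]{Nesterov1994} rather than rederiving it), and minimize the resulting bound $D/\alpha+t\nu/(1-\alpha)$ over $\alpha\in(0,1)$. Your finish via the substitution $u=(1-\alpha)/\alpha$ and AM--GM is in fact slightly cleaner than the paper's, which takes a redundant detour through an intermediate logarithmic estimate before arriving at the same minimization.
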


%+ The proof of Lemma 3.2.
\begin{proof}
The first two inequalities in \eqref{eq:first_est} are trivial due to the definitions of $g(\cdot)$,  $g(\cdot;t)$  and the feasibility of $\bar{x}$. 
We only prove \eqref{eq:g_est}. Indeed, since $\bar{x}\in\mathrm{int}(X)$ and $x^{*}(y)\in X$, if we define $x^{*}_{\tau}(y) := \bar{x} + \tau(x^{*}(y) -
\bar{x})$, then $x_{\tau}(y)\in\mathrm{int}X$ if $\tau\in [0, 1)$. 
By applying the inequality  \cite[2.3.3]{Nesterov1994} we have: 
\begin{equation*}
F(x_{\tau}(y)) \leq F(\bar{x}) - \nu\ln(1-\tau).
\end{equation*}
Using this inequality together with the definition of $g(\cdot;t)$, the concavity of $\phi$ and $A\bar{x} = b$, we deduce:
\begin{align}\label{eq:lm35_proof1}
g(y;t) &= \max_{x\in\mathrm{int}(X)}\left\{\phi(x) + y^T(Ax-b) -  tF(x) \right\} \nonumber\\
&\geq \max_{\tau\in [0, 1)}\left\{ \phi(x_{\tau}(y)) + y^T(Ax_{\tau}(y) - b) - tF(x_{\tau}(y)) \right\} \\
&\geq \max_{\tau\in [0, 1)}\left\{ (1-\tau)\phi(\bar{x}) + \tau g(y) + t\nu\ln(1-\tau)\right\} - tF(\bar{x})\nonumber.
\end{align}
By solving the maximization problem on the right hand side of \eqref{eq:lm35_proof1} and  then rearranging the results, we obtain:
\begin{equation}\label{eq:lm35_proof2}
g(y) \leq g(y;t) + t[\nu  + F(\bar{x})] +  t\nu\Big[\ln\Big(\frac{g(y) - \phi(\bar{x})}{t\nu}\Big)\Big]_{+},
\end{equation}
where $[\cdot]_{+} := \max\set{\cdot, 0}$. 
Moreover, it follows from \eqref{eq:lm35_proof1} that:
\begin{align*}
g(y) - \phi(\bar{x}) &\leq \frac{1}{\tau}\Big[g(y;t)-\phi(\bar{x}) + tF(\bar{x}) + t\nu\ln(1+\frac{\tau}{1-\tau})\Big] \nonumber\\
&\leq \frac{1}{\tau}\Big[g(y;t)-\phi(\bar{x}) + tF(\bar{x})\Big] + \frac{t\nu}{1-\tau}. \nonumber
\end{align*}
If we minimize the right hand side of this inequality in $[0,1)$, then we get $g(y) - \phi(\bar{x}) \leq [\left(g(y;t)-\phi(\bar{x}) +
tF(\bar{x})\right)^{1/2} + \sqrt{t\nu}]^2$. Finally, we plug this inequality into \eqref{eq:lm35_proof2} to obtain: 
\begin{align*}
g(y) &\leq g(y;t) + t\nu + 2t\nu\ln\left(1 + \sqrt{\frac{[g(y;t) - \phi(\bar{x}) + tF(\bar{x}]}{t\nu}}\right)
 + tF(\bar{x}) \\
& \leq g(y;t) + t\nu + tF(\bar{x}) + 2\sqrt{t\nu}\left[g(y;t)-\phi(\bar{x}) + tF(\bar{x})\right]^{1/2}, 
\end{align*}
which is indeed \eqref{eq:g_est}. 
\eofproof
\end{proof}
%+ End of the proof.

%+ Remark 3.3.
\begin{remark}[\textit{Approximation of $g(y)$}]\label{re:epsilon_gap}
It follows from \eqref{eq:g_est} that $g(y) \leq (1 + 2\sqrt{t\nu})g(y;t)  + t(\nu + F(\bar{x})) + 2\sqrt{t\nu}(tF(\bar{x}) - \phi(\bar{x}))$. Hence, $g(y;t)\to
g(y)$ as $t\to 0^{+}$. Moreover, this estimate is different from the one in \cite{TranDinh2012c}, since  we do not assume that $Y$ is bounded.
\end{remark}

Next, we consider the following minimization problem,  called \textit{smoothed dual problem}:
\begin{equation}\label{eq:gm_dual_prob}
g^{*}(t) := g(y^{*}(t); t) = \min_{y\in\RR^m}g(y; t).  
\end{equation}
We denote by $y^{*}(t)$ the solution of \eqref{eq:gm_dual_prob}. The following lemma shows the main properties of the functions $g(y;\cdot)$ and $g^{*}(\cdot)$.

%+ Lemma 3.3.
\begin{lemma}\label{le:properties_of_d}
Suppose that Assumptions \aref{as:A1} and \aref{as:A2} are satisfied. Then:
\begin{itemize}
\item[$(\mathrm{a)}$] The function $g(y;\cdot)$ is convex and nonincreasing in $\RR_{++}$ for a given $y\in\RR^m$.
Moreover, we have:
\begin{equation}\label{eq:g_est3}
g(y;\hat{t}) \geq g(y; t) - (\hat{t}-t)F(x^{*}(y;t)).
\end{equation}
\item[$(\mathrm{b)}$] The function $g^{*}(\cdot)$ defined by \eqref{eq:gm_dual_prob} is differentiable and nonincreasing in $\RR_{++}$.
Moreover, $g^{*}(t) \leq g^{*}$, $\lim_{t\downarrow 0^{+}}g^{*}(t) = g^{*} = \phi^{*}$ and  $x^{*}(y^{*}(t);t)$  is feasible to the
original problem \eqref{eq:sep_cp}.
\end{itemize}
\end{lemma}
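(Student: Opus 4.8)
\noindent The plan is to treat parts (a) and (b) in turn, relying throughout on the fact that for a fixed $x\in\mathrm{int}(X)$ the map $(y,t)\mapsto \phi(x)+y^T(Ax-b)-tF(x)$ is \emph{jointly affine}, and that $F\geq 0$ on $\mathrm{int}(X)$ (indeed, by \eqref{eq:F_upper} and the normalization $F_i(x_i^c)=0$ we have $F_i\geq 0$, hence $F=\sum_i F_i\geq 0$). For part (a), $g(y;\cdot)$ is, for fixed $y$, the pointwise supremum over $x\in\mathrm{int}(X)$ of the affine functions $t\mapsto \phi(x)+y^T(Ax-b)-tF(x)$, each of slope $-F(x)\leq 0$; hence it is convex and nonincreasing on $\RR_{++}$. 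To get \eqref{eq:g_est3} I would simply plug the (in general suboptimal) point $x^{*}(y;t)$ into the maximization defining $g(y;\hat t)$:
\begin{equation*}
g(y;\hat t)\ \geq\ \phi(x^{*}(y;t))+y^T\!\big(Ax^{*}(y;t)-b\big)-\hat t\,F(x^{*}(y;t))\ =\ g(y;t)-(\hat t-t)F(x^{*}(y;t)),
\end{equation*}
the equality being the definition of $g(y;t)$ together with that of $x^{*}(y;t)$.

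For part (b), joint affineness of the inner objective makes $g(\cdot;\cdot)=\max_x(\cdot)$ jointly convex, so $g^{*}(t)=\min_{y}g(y;t)$ is a partial minimization of a jointly convex function and therefore convex in $t$; it is nonincreasing because, by (a), $g(y;\hat t)\leq g(y;t)$ for all $y$ whenever $t\leq\hat t$, so the same holds after taking $\min_y$. The crux is differentiability of $g^{*}$. Fix $t>0$. Using $g^{*}(\hat t)\leq g(y^{*}(t);\hat t)$ and $g^{*}(t)=g(y^{*}(t);t)$, dividing by $\hat t-t$ and letting $\hat t\downarrow t$, resp.\ $\hat t\uparrow t$, gives
\begin{equation*}
(g^{*})'_{+}(t)\ \leq\ \tfrac{d}{dt}g(y^{*}(t);t)\ \leq\ (g^{*})'_{-}(t),
\end{equation*}
where a standard envelope (Danskin) argument — legitimate since $x^{*}(y;t)$ is the \emph{unique} maximizer in \eqref{eq:g_yt_i} and $F$ is continuous on $\mathrm{int}(X)$ — yields $\tfrac{d}{dt}g(y^{*}(t);t)=-F(x^{*}(y^{*}(t);t))$. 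Since $g^{*}$ is convex, $(g^{*})'_{-}(t)\leq (g^{*})'_{+}(t)$, so all three quantities coincide: $g^{*}$ is differentiable with $(g^{*})'(t)=-F(x^{*}(y^{*}(t);t))\leq 0$, which re-confirms monotonicity.

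It remains to establish the bounds, the limit, and feasibility. For $g^{*}(t)\leq g^{*}$: taking $y^{*}\in Y^{*}$, we get $g^{*}(t)=\min_y g(y;t)\leq g(y^{*};t)\leq g(y^{*})=g^{*}$, the middle inequality being the left-hand part of \eqref{eq:g_est}. Since $g^{*}=\phi^{*}$ by strong duality (Assumption \aref{as:A1}) and $g^{*}(\cdot)$ is nonincreasing, $\lim_{t\downarrow 0^{+}}g^{*}(t)=\sup_{t>0}g^{*}(t)\leq g^{*}$; for the reverse inequality I would evaluate \eqref{eq:g_est} at $y=y^{*}(t)$, use $g(y^{*}(t))\geq g^{*}$ together with the two-sided bound $\phi(\bar x)-tF(\bar x)\leq g^{*}(t)\leq g^{*}$ (the lower bound coming from \eqref{eq:first_est} at $y=y^{*}(t)$) to see that the correction terms $t(\nu+F(\bar x))$ and $2\sqrt{t\nu}\,[g^{*}(t)+tF(\bar x)-\phi(\bar x)]^{1/2}$ tend to $0$ as $t\downarrow 0^{+}$, whence $g^{*}\leq\liminf_{t\downarrow 0^{+}}g^{*}(t)$ and thus the limit equals $g^{*}=\phi^{*}$. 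Finally, $x^{*}(y^{*}(t);t)\in X$ by construction, and because $g(\cdot;t)$ is smooth and minimized at $y^{*}(t)$ we have $0=\nabla g(y^{*}(t);t)=Ax^{*}(y^{*}(t);t)-b$ by \eqref{eq:grad_g}, so $x^{*}(y^{*}(t);t)$ is feasible for \eqref{eq:sep_cp}.

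\noindent\textbf{Main obstacle.} I expect the delicate point to be the differentiability of $g^{*}$: one must justify the envelope/Danskin differentiation of $t\mapsto g(y^{*}(t);t)$ (handled by uniqueness and continuity of the inner maximizer and continuity of $F$) and, implicitly, that the smoothed dual \eqref{eq:gm_dual_prob} is solvable so that $y^{*}(t)$ — and hence $(g^{*})'(t)$ — is well defined; the latter follows from coercivity of $g(\cdot;t)$, which in turn uses that $A$ has full row rank and $\mathrm{ri}(X)\cap\{x\,|\,Ax=b\}\neq\emptyset$.
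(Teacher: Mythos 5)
Your proposal is correct, and it is both more elementary and more complete than what the paper actually writes down. For the one part the paper proves, namely \eqref{eq:g_est3}, the paper invokes convexity and differentiability of $g(y;\cdot)$ together with the envelope formula $\tfrac{d}{dt}g(y;t)=-F(x^{*}(y;t))$ and then applies the gradient inequality; you instead obtain \eqref{eq:g_est3} by substituting the suboptimal point $x^{*}(y;t)$ into the maximization defining $g(y;\hat t)$, which requires no differentiability in $t$ at all and is arguably the cleaner route (the paper's one-liner silently presupposes exactly the Danskin-type fact you isolate as the delicate point). For everything else in part (b) the paper simply defers to \cite{Necoara2009,TranDinh2012c}, so your arguments there --- joint convexity of $g(\cdot;\cdot)$ as a supremum of jointly affine functions and preservation of convexity under partial minimization, the two-sided difference-quotient sandwich combined with $(g^{*})'_{-}\leq (g^{*})'_{+}$ to get differentiability of $g^{*}$, the bound $g^{*}(t)\leq g(y^{*};t)\leq g(y^{*})=g^{*}$ for $y^{*}\in Y^{*}$, the limit $t\downarrow 0^{+}$ via \eqref{eq:g_est} and \eqref{eq:first_est} evaluated at $y=y^{*}(t)$, and feasibility from $\nabla g(y^{*}(t);t)=Ax^{*}(y^{*}(t);t)-b=0$ --- fill in material the paper omits, and they are all sound. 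Your closing remark about solvability of \eqref{eq:gm_dual_prob} (so that $y^{*}(t)$ is well defined) correctly identifies an implicit assumption the paper also makes without comment.
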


%+ The proof of Lemma 3.3.
\begin{proof}
We only prove \eqref{eq:g_est3}, the proof of the remainders can be found in \cite{Necoara2009,TranDinh2012c}. Indeed, since $g(y;\cdot)$ is convex and
differentiable and $\frac{dg(y;t)}{dt} = -F(x^{*}(y;t))\leq 0$, we have $g(y;\hat{t}) \geq g(y;t) + (\hat{t}-t)\frac{dg(y;t)}{dt} = g(y;t) - (\hat{t} - t)
F(x^{*}(y;t))$. 
\eofproof
\end{proof}
%+ End of the proof.

The statement (b) of Lemma \ref{le:properties_of_d} shows that if we find an approximate solution $y^k$ for \eqref{eq:gm_dual_prob} for sufficiently small
$t_k$, then $g^{*}(t_k)$ approximates  $g^{*}$ (recall that $g^{*} = \phi^{*}$) and $x^{*}(y^k;t_k)$ is approximately feasible to \eqref{eq:sep_cp}.

%%%%%%%%%%%%%%%%%%%%%%%%%%%%%%%%%%%%%%%%%%%%%%%%%%%%%%%%%%%%%%%%%%%%%%%%%%%%%%%%%
%+ 4. Path-Following gradient method
%%%%%%%%%%%%%%%%%%%%%%%%%%%%%%%%%%%%%%%%%%%%%%%%%%%%%%%%%%%%%%%%%%%%%%%%%%%%%%%%%
\section{Path-following gradient method}\label{sec:gradient_method}
In this section we design a path-following gradient algorithm to solve the dual problem \eqref{eq:dual_prob}, analyze the convergence of the algorithm and
estimate the local convergence rate.

%%%%%%%%%%%%%%%%%%%%%%%%%%%%%%%%%%%%%%%%%%%%%%%%%%%%%%
%+ 4.1. Designing the path-following gradient scheme.
%%%%%%%%%%%%%%%%%%%%%%%%%%%%%%%%%%%%%%%%%%%%%%%%%%%%%%
\subsection{The path-following gradient scheme}
Since $g(\cdot;t)$ is strictly convex and smooth,  we can write the optimality  condition of \eqref{eq:gm_dual_prob} as:
\begin{equation}\label{eq:optimality_of_gm_dual_prob}
\nabla{g}(y; t) = 0.
\end{equation}
This equation has a unique solution $y^{*}(t)$. 

Now, for any given $x\in\mathrm{int}(X)$, $\nabla{F}(x)$ is positive definite. We introduce a local matrix norm:  
\begin{equation}\label{eq:norm_A}
\dnorm{A}_x^{*} :=  \norm{A\nabla^2 F(x)^{-1}A^T}_2,
\end{equation}
The following lemma shows a main property of the function $g(\cdot; t)$.

%+ Lemma 4.1.
\begin{lemma}\label{le:Lipschitz_type}
Suppose that Assumptions \aref{as:A1} and \aref{as:A2} are satisfied. Then,  for all $t > 0$ and $y, \hat{y}\in\RR^m$, one has:
\begin{equation}\label{eq:gm_Lipschitz}
[\nabla{g}(y; t) - \nabla{g}(\hat{y}; t)]^T(y  - \hat{y}) \geq \frac{t\norm{\nabla{g}(y;t) - \nabla{g}(\hat{y};t)}_2^2}{c_A \left[ c_A + \norm{\nabla{g}(y,t) -
\nabla{g}(\hat{y};t)}_2\right]},
\end{equation}
where $c_A := \dnorm{A}^{*}_{x^{*}(y;t)}$. 
Consequently, it holds that:
\begin{equation}\label{eq:g_bound}
g(\hat{y};t) \leq g(y;t) + \nabla{g}(y;t)^T(\hat{y}-y) + t\omega^{*}(c_At^{-1}\norm{\hat{y}-y}_2),
\end{equation}
provided that $c_A\norm{\hat{y}-y}_2 < t$.
\end{lemma}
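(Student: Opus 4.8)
The plan is to obtain \eqref{eq:gm_Lipschitz} from the optimality conditions \eqref{eq:optimality_g_yt}, the monotonicity of the super-differential of $\phi$, and the standard monotonicity inequality for self-concordant functions, and then to deduce \eqref{eq:g_bound} by integrating \eqref{eq:gm_Lipschitz} along the segment joining $y$ and $\hat{y}$.

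First I would prove \eqref{eq:gm_Lipschitz}. Put $x^{*} := x^{*}(y;t)$ and $\hat{x}^{*} := x^{*}(\hat{y};t)$, both of which lie in $\mathrm{int}(X)$ by construction of the primal subproblems. Stacking the component conditions \eqref{eq:optimality_g_yt} (and using separability, $\partial\phi = \prod_i\partial\phi_i$) gives vectors $\xi\in\partial\phi(x^{*})$ and $\hat{\xi}\in\partial\phi(\hat{x}^{*})$ with $\xi + A^Ty = t\nabla F(x^{*})$ and $\hat{\xi} + A^T\hat{y} = t\nabla F(\hat{x}^{*})$. Subtracting and using $\nabla g(\cdot;t) = Ax^{*}(\cdot;t) - b$ from \eqref{eq:grad_g} yields
\[
[\nabla g(y;t)-\nabla g(\hat{y};t)]^T(y-\hat{y}) = t\,(x^{*}-\hat{x}^{*})^T[\nabla F(x^{*})-\nabla F(\hat{x}^{*})] - (x^{*}-\hat{x}^{*})^T(\xi-\hat{\xi}).
\]
Since each $\phi_i$ is concave, $-\xi$ and $-\hat{\xi}$ are subgradients of the convex function $-\phi$ at $x^{*}$ and $\hat{x}^{*}$, so monotonicity gives $(x^{*}-\hat{x}^{*})^T(\xi-\hat{\xi})\le 0$ and hence the left-hand side is $\ge t\,(x^{*}-\hat{x}^{*})^T[\nabla F(x^{*})-\nabla F(\hat{x}^{*})]$. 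Applying the monotonicity inequality for the self-concordant barrier $F$ along the segment $[x^{*},\hat{x}^{*}]\subset\mathrm{int}(X)$ gives $(x^{*}-\hat{x}^{*})^T[\nabla F(x^{*})-\nabla F(\hat{x}^{*})]\ge \rho^2/(1+\rho)$ with $\rho := \norm{x^{*}-\hat{x}^{*}}_{x^{*}}$. Finally, from \eqref{eq:norm_A} one has $\norm{Au}_2\le\dnorm{A}_{x^{*}}^{*}\norm{u}_{x^{*}}$ for all $u$; taking $u = x^{*}-\hat{x}^{*}$ and $c_A := \dnorm{A}_{x^{*}(y;t)}^{*}$ gives $\norm{\nabla g(y;t)-\nabla g(\hat{y};t)}_2\le c_A\rho$, i.e. $\rho\ge c_A^{-1}\norm{\nabla g(y;t)-\nabla g(\hat{y};t)}_2$. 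Since $\tau\mapsto\tau^2/(1+\tau)$ is nondecreasing on $\RR_{+}$, substituting this lower bound on $\rho$ produces exactly \eqref{eq:gm_Lipschitz}.

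For \eqref{eq:g_bound} I would set $h(s) := g(y + s(\hat{y}-y);t)$ and $\delta_s := \norm{\nabla g(y+s(\hat{y}-y);t) - \nabla g(y;t)}_2$. Because $g(\cdot;t)$ is $C^1$, $h(1)-h(0)-h'(0) = \int_0^1 [\nabla g(y+s(\hat{y}-y);t)-\nabla g(y;t)]^T(\hat{y}-y)\,ds \le \int_0^1 \delta_s\norm{\hat{y}-y}_2\,ds$ by Cauchy--Schwarz. Applying \eqref{eq:gm_Lipschitz} to the pair $\big(y,\,y+s(\hat{y}-y)\big)$ (whose associated constant is again $c_A = \dnorm{A}_{x^{*}(y;t)}^{*}$) and using Cauchy--Schwarz on its left-hand side gives $s\norm{\hat{y}-y}_2 \ge t\delta_s\big/\big(c_A(c_A+\delta_s)\big)$; since $c_A\norm{\hat{y}-y}_2 < t$ and $s\in[0,1]$, this can be solved for $\delta_s$ to get $\delta_s \le c_A^2 s\norm{\hat{y}-y}_2\big/\big(t - c_A s\norm{\hat{y}-y}_2\big)$. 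With $\beta := c_A\norm{\hat{y}-y}_2\in[0,t)$ the integrand bound becomes $\delta_s\norm{\hat{y}-y}_2 \le \beta^2 s/(t-\beta s)$, and a direct computation gives $\int_0^1 \beta^2 s/(t-\beta s)\,ds = -\beta - t\ln(1-\beta/t) = t\,\omega^{*}(c_A t^{-1}\norm{\hat{y}-y}_2)$; adding $h(0)+h'(0) = g(y;t) + \nabla g(y;t)^T(\hat{y}-y)$ yields \eqref{eq:g_bound}.

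The main obstacle I expect is the self-concordance step: one must invoke the correct monotonicity inequality, $(x-\hat{x})^T(\nabla F(x)-\nabla F(\hat{x}))\ge\norm{x-\hat{x}}_x^2/(1+\norm{x-\hat{x}}_x)$, and — crucially — keep the local Hessian norm and the matrix norm $\dnorm{A}_{\cdot}^{*}$ anchored at the \emph{same} point $x^{*}(y;t)$, so that the constant that appears is precisely the $c_A$ in the statement (parametrizing the segment from $x^{*}(y;t)$ rather than from $x^{*}(\hat{y};t)$). The super-differential-monotonicity argument and the elementary integral at the end are routine once this is set up.
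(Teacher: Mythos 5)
Your proposal is correct and follows essentially the same route as the paper's proof: the optimality conditions combined with monotonicity of the super-differential of $\phi$ and the self-concordance inequality $(\nabla F(x)-\nabla F(\hat{x}))^T(x-\hat{x})\geq \norm{x-\hat{x}}_x^2/(1+\norm{x-\hat{x}}_x)$ give \eqref{eq:gm_Lipschitz}, and the second claim follows by the same Cauchy--Schwarz bound on $\delta_s$ and the same elementary integral. Your remark about anchoring both the local norm and $\dnorm{A}^{*}_{\cdot}$ at $x^{*}(y;t)$ is exactly the point the paper's inequality \eqref{eq:lm31_est2} takes care of.
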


%+ Proof of Lemma 4.1.
\begin{proof}
For notational simplicity, we denote by $x^{*} := x^{*}(y;t)$ and $\hat{x}^{*} := x^{*}(\hat{y};t)$.  From the definition \eqref{eq:grad_g} of $\nabla{g}(\cdot;
t)$ and the Cauchy-Schwarz inequality we have:
\begin{align}
&[\nabla{g}(y;t) - \nabla{g}(\hat{y};t)]^T(y-\hat{y}) = (y-\hat{y})^TA(x^{*}  - \hat{x}^{*}). \label{eq:lm31_est1}\\
&\norm{\nabla{g}(\hat{y};t) -  \nabla{g}(y;t)}_2  \leq \dnorm{A}^{*}_{x^{*}}\norm{\hat{x}^{*}- x^{*}}_{x^{*}}.\label{eq:lm31_est2}
\end{align}
It follows from \eqref{eq:optimality_g_yt} that $A^T(y-\hat{y}) = t[\nabla{F}(x^{*}) - \nabla{F}(\hat{x}^{*}] - [\xi(x^{*}) - \xi(\hat{x}^{*})]$, where
$\xi(\cdot)\in\partial\phi(\cdot)$.
By multiplying this  relation with $x^{*} - \hat{x}^{*}$ and then using \cite[Theorem 4.1.7]{Nesterov2004} and the concavity of $\phi$ we obtain:
\begin{align*}
(y-\hat{y})^TA(x^{*} - \hat{x}^{*}) &= t[\nabla{F}(x^{*}) - \nabla{F}(\hat{x}^{*})]^T(x^{*} - \hat{x}^{*}) - [\xi(x^{*}) - \xi(\hat{x}^{*})]^T(x^{*} -
\hat{x}^{*})\nonumber\\
&\overset{\tiny\textrm{concavity of}~\phi}{\geq} t[\nabla{F}(x^{*}) - \nabla{F}(\hat{x}^{*})]^T(x^{*} - \hat{x}^{*})\nonumber\\
&\geq \frac{t\norm{x^{*} - \hat{x}^{*}}_{x^{*}}^2}{1 + \norm{x^{*} - \hat{x}^{*}}_{x^{*}}}\nonumber\\
&\overset{\tiny\eqref{eq:lm31_est2}}{\geq}\frac{t\left[\norm{\nabla{g}(y;t) - \nabla{g}(\hat{y};t)}_2\right]^2}{\dnorm{A}^{*}_{x^{*}}
\left[\dnorm{A}^{*}_{x^{*}} + \norm{\nabla{g}(y;t) - \nabla{g}(\hat{y};t)}_2\right]}. \nonumber
\end{align*}
Substituting this inequality into \eqref{eq:lm31_est1} we obtain \eqref{eq:gm_Lipschitz}.

By the Cauchy-Schwarz inequality,  it follows from \eqref{eq:gm_Lipschitz} that $\norm{\nabla{g}(\hat{y};t) - \nabla{g}(y;t)} \leq
\frac{c_A^2\norm{\hat{y}-y}_2}{t - c_A\norm{\hat{y} - y}}$, provided that $c_A\norm{\hat{y} - y} \leq t$. Finally, by using the mean-value theorem, we have:
\begin{align*}
g(\hat{y};t) &= g(y;t) + \nabla{g}(y;t)^T(\hat{y}-y)  + \int_0^1(\nabla{g}(y + s(\hat{y}-y); t) - \nabla{g}(y;t))^T(\hat{y}-y)ds\\
&\leq g(y;t) + \nabla{g}(y;t)^T(\hat{y} - y)  + c_A\norm{\hat{y} - y}_2\int_0^1\frac{c_As\norm{\hat{y} - y}_2}{t - c_A s\norm{\hat{y} -y}_2}ds\\
&= g(y;t) + \nabla{g}(y;t)^T(\hat{y}-y)  + t\omega^{*}(c_At^{-1}\norm{\hat{y}-y}_2),
\end{align*}
which is indeed \eqref{eq:g_bound} provided that $c_A\norm{\hat{y}-y}_2 < t$.
\eofproof
\end{proof}

Now, we describe one step of the path-following gradient method for solving \eqref{eq:gm_dual_prob}. Let us assume that $y\in\RR^m$ and $t > 0$ are the values
at the current iteration, the values $y_{+}$ and $t_{+}$ at the next iteration are computed as:
\begin{equation}\label{eq:gradient_scheme}
\begin{cases}
t_{+} := t - \Delta{t},\\
y_{+} := y - \alpha \nabla{g}(y, t_{+}),
\end{cases}
\end{equation}
where $\alpha :=\alpha(y;t) > 0$ is the current step size and $\Delta{t}$ is the  decrement of the parameter $t$. 
In order to analyze the convergence of the scheme \eqref{eq:gradient_scheme}, we introduce the following notation:
\begin{equation}\label{eq:new_notation}
x^{*}_1 := x^{*}(y;t_{+}), ~~c_{A1} = \dnorm{A}_{x^{*}(y;t_{+})}^{*} ~\mathrm{and}~ \lambda_1 := \norm{\nabla g(y;t_{+})}_2.
\end{equation}
First, we prove an important property of the \textit{path-following gradient scheme} \eqref{eq:gradient_scheme}.

%+ Lemma 3.2.
\begin{lemma}\label{le:gradient_method}
Under Assumptions \aref{as:A1} and \aref{as:A2}, the following inequality holds:
\begin{equation}\label{eq:gm_key_est1}
g(y_{+};t_{+}) \leq g(y;t) - \left[\alpha\lambda^2_1 -  t_{+}\omega^{*}(c_{A1}t_{+}^{-1}\alpha\lambda_1) - \Delta{t}F(x^{*}_1)\right],
\end{equation}
where $c_{A1}$ and $\lambda_1$ are defined by \eqref{eq:new_notation}.
\end{lemma}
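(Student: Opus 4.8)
The plan is to chain together the two structural estimates already in hand: the monotonicity bound \eqref{eq:g_est3} from Lemma \ref{le:properties_of_d}(a), which controls how $g(y;\cdot)$ changes when the smoothing parameter is decreased, and the descent-type inequality \eqref{eq:g_bound} from Lemma \ref{le:Lipschitz_type}, which controls a gradient step performed at a \emph{fixed} value of the smoothing parameter. Since the update \eqref{eq:gradient_scheme} does both things (it first lowers $t$ to $t_{+}$ and then takes a gradient step in $y$), it is natural to estimate the two effects separately and add them.

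First I would bound the effect of shrinking $t$. Applying \eqref{eq:g_est3} with the larger parameter equal to $t$ and the smaller one equal to $t_{+}=t-\Delta t$, i.e. $g(y;t)\geq g(y;t_{+})-(t-t_{+})F(x^{*}(y;t_{+}))$, and using the notation $x_1^{*}=x^{*}(y;t_{+})$ from \eqref{eq:new_notation}, this rearranges to $g(y;t_{+})\leq g(y;t)+\Delta t\,F(x_1^{*})$.

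Next I would apply \eqref{eq:g_bound} with smoothing parameter $t_{+}$, at the base point $y$ and the updated point $\hat y=y_{+}=y-\alpha\nabla g(y;t_{+})$. Since $y_{+}-y=-\alpha\nabla g(y;t_{+})$ we have $\nabla g(y;t_{+})^{T}(y_{+}-y)=-\alpha\lambda_1^2$ and $\norm{y_{+}-y}_2=\alpha\lambda_1$, where $\lambda_1=\norm{\nabla g(y;t_{+})}_2$ and $c_{A1}=\dnorm{A}^{*}_{x_1^{*}}$ as in \eqref{eq:new_notation}; hence, provided the step size obeys the admissibility condition $c_{A1}\alpha\lambda_1<t_{+}$ that \eqref{eq:g_bound} requires, we obtain $g(y_{+};t_{+})\leq g(y;t_{+})-\alpha\lambda_1^2+t_{+}\omega^{*}(c_{A1}t_{+}^{-1}\alpha\lambda_1)$. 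Combining this with the previous display gives $g(y_{+};t_{+})\leq g(y;t)+\Delta t\,F(x_1^{*})-\alpha\lambda_1^2+t_{+}\omega^{*}(c_{A1}t_{+}^{-1}\alpha\lambda_1)$, which is exactly \eqref{eq:gm_key_est1} after grouping the three bracketed terms.

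The computation is otherwise a direct substitution; the one point that needs care is the implicit hypothesis $c_{A1}\alpha\lambda_1<t_{+}$ needed to invoke \eqref{eq:g_bound}, and I expect this admissibility condition on $\alpha$ to be the main thing to track — it is presumably what the yet-to-come step-size rule of Algorithm \ref{alg:gradient_method} will enforce. I would also remark that $F\geq 0$ (since $F(x^{c})=0$ and \eqref{eq:F_upper} holds), so that $\Delta t\,F(x_1^{*})\geq 0$ whenever $\Delta t\geq 0$, which is consistent with reading \eqref{eq:gm_key_est1} as a controlled per-iteration decrease of $g$.
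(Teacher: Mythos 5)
Your argument is correct and is essentially identical to the paper's own proof: both first apply \eqref{eq:g_est3} with the pair $(t,t_{+})$ to get $g(y;t_{+})\leq g(y;t)+\Delta t\,F(x_1^{*})$, then apply \eqref{eq:g_bound} at parameter $t_{+}$ with $\hat y=y_{+}$, and add the two estimates. Your explicit tracking of the admissibility condition $c_{A1}\alpha\lambda_1<t_{+}$ is a point the paper leaves implicit (it is indeed enforced by the step-size rule \eqref{eq:alpha_bounds}), but it does not change the route.
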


%+ Proof of Lemma 3.2.
\begin{proof}
Since $t_{+} = t - \Delta{t}$, by using \eqref{eq:g_est3} with $t$ and $t_{+}$, we have:
\begin{equation}\label{eq:lm32_proof1}
g(y;t_{+}) \leq g(y;t) + \Delta{t}F(x^{*}(y;t_{+})).
\end{equation}
Next, by \eqref{eq:g_bound} we have $y_{+} - y = -\alpha\nabla{g}(y;t_{+})$ and  $\lambda_1 := \norm{\nabla{g}(y;t_{+})}_2$. Hence, we can derive:
\begin{equation}\label{eq:lm32_proof2}
g(y_{+}; t_{+}) \leq g(y;t_{+}) - \alpha\lambda_1^2 + t_{+}\omega^{*}(c_{A1}\alpha\lambda_1t_{+}^{-1}).
\end{equation}
By plugging \eqref{eq:lm32_proof1} into \eqref{eq:lm32_proof2},  we obtain \eqref{eq:gm_key_est1}.
\eofproof
\end{proof}
%+ End of the proof.

%+ Lemma 4.3.
\begin{lemma}\label{le:M_bound}
For any $y\in\RR^m$ and $t>0$, the constant $c_A := \dnorm{A}_{x^{*}(y;t_{+})}^{*}$ is bounded. 
More precisely, $c_A \leq \bar{c}_A := \kappa\dnorm{A}_{x^c}^{*} < +\infty$.
Furthermore, $\lambda := \norm{\nabla{g}(y; t)}_2$ is also bounded, i.e.: $\lambda \leq \bar{\lambda} := \kappa\dnorm{A}_{x^c}^{*} + \norm{Ax^c - b}_2$,
where $\kappa := \sum_{i=1}^N[\nu_i + 2\sqrt{\nu_i}]$.
\end{lemma}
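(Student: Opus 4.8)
The plan is to obtain both bounds from a single geometric estimate: with respect to the Hessian metric of $F$ at the analytic center $x^c$, the whole feasible set $X$ has ``radius'' at most $\kappa$, and this metric is comparable — up to the factor $\kappa$ — with the Hessian metric at any interior point. As a first ingredient I would record that $\norm{x-x^c}_{x^c}\le\kappa$ for every $x\in X$. This follows immediately from the second estimate in \eqref{eq:F_upper}, $\norm{x_i-x_i^c}_{x_i^c}\le\nu_i+2\sqrt{\nu_i}$, together with the block-separable structure $\nabla^2F=\mathrm{diag}(\nabla^2F_1,\dots,\nabla^2F_N)$: indeed $\norm{x-x^c}_{x^c}^2=\sum_{i=1}^N\norm{x_i-x_i^c}_{x_i^c}^2\le\sum_{i=1}^N(\nu_i+2\sqrt{\nu_i})^2\le\big(\sum_{i=1}^N(\nu_i+2\sqrt{\nu_i})\big)^2=\kappa^2$, using $\sum_i a_i^2\le(\sum_i a_i)^2$ for $a_i\ge 0$.

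The second and main ingredient is the comparison of dual local norms: for every $z\in\mathrm{int}(X)$ and every $u\in\RR^n$ one has $\norm{u}_z^{*}\le\kappa\,\norm{u}_{x^c}^{*}$. To prove it I would use that the open Dikin ellipsoid of the self-concordant function $F$ at $z$ is contained in $X$, i.e. $\norm{h}_z<1$ implies $z\pm h\in X$. Writing $h=\tfrac12\big[(z+h-x^c)-(z-h-x^c)\big]$ and applying the Cauchy--Schwarz inequality $u^Tv\le\norm{u}_{x^c}\norm{v}_{x^c}^{*}$ from the preliminaries to each piece — whose local norm at $x^c$ is at most $\kappa$ by the first ingredient, since $z\pm h\in X$ — gives $u^Th\le\tfrac12\big(\norm{z+h-x^c}_{x^c}+\norm{z-h-x^c}_{x^c}\big)\norm{u}_{x^c}^{*}\le\kappa\norm{u}_{x^c}^{*}$; taking the supremum over $\norm{h}_z\le 1$ yields the claim. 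Specializing $u=A^Tv$ and maximizing over $\norm{v}_2\le 1$ in \eqref{eq:norm_A} then produces $\dnorm{A}_z^{*}\le\kappa\,\dnorm{A}_{x^c}^{*}$ (with $\dnorm{\cdot}$ normalized as used in \eqref{eq:lm31_est2}).

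It then remains to assemble the two conclusions. Applying the comparison at $z=x^{*}(y;t_{+})\in\mathrm{int}(X)$ gives $c_A=\dnorm{A}_{x^{*}(y;t_{+})}^{*}\le\kappa\,\dnorm{A}_{x^c}^{*}=:\bar c_A$, and $\bar c_A<\infty$ because $\nabla^2F(x^c)$ is positive definite ($x^c\in\mathrm{int}(X)$). For $\lambda$, I would use $\nabla g(y;t)=Ax^{*}(y;t)-b$ from \eqref{eq:grad_g} and split $\lambda\le\norm{A(x^{*}(y;t)-x^c)}_2+\norm{Ax^c-b}_2$; the first term equals $\sup_{\norm{v}_2\le 1}(A^Tv)^T(x^{*}(y;t)-x^c)\le\dnorm{A}_{x^c}^{*}\,\norm{x^{*}(y;t)-x^c}_{x^c}\le\kappa\,\dnorm{A}_{x^c}^{*}$ by the first ingredient, so $\lambda\le\kappa\,\dnorm{A}_{x^c}^{*}+\norm{Ax^c-b}_2=\bar\lambda$.

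The one genuinely delicate point is the local-norm comparison: since $\norm{x^{*}(y;t_{+})-x^c}_{x^c}$ may be much larger than $1$, the familiar self-concordance inequalities relating $\nabla^2F$ at two points are not available (they degenerate as the local radius tends to $1$), so the bound cannot be read off from any Hessian-continuity estimate. The device that resolves this is to exploit the central symmetry of the Dikin ellipsoid at $z$ and its containment in $X$, which is precisely what yields the clean constant $\kappa$ rather than a quantity blowing up near the boundary. Everything else is routine bookkeeping with \eqref{eq:F_upper}, \eqref{eq:grad_g}, and Cauchy--Schwarz.
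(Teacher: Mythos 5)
Your proof is correct and follows essentially the same route as the paper: bound $\dnorm{A}_x^{*}$ by comparing the dual local norm at an arbitrary interior point with that at the analytic center, then get the bound on $\lambda$ from the triangle inequality, \eqref{eq:grad_g} and \eqref{eq:F_upper}. The only difference is that where the paper simply cites \cite[Corollary 4.2.1]{Nesterov2004} for the comparison $\norm{u}_x^{*}\leq(\nu+2\sqrt{\nu})\norm{u}_{x^c}^{*}$, you reprove it from scratch via the Dikin-ellipsoid symmetry argument, arriving at the (slightly weaker, since $\nu+2\sqrt{\nu}\leq\kappa$) constant $\kappa$ used in the lemma statement — a harmless and self-contained substitute.
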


%+ Proof of Lemma 3.3.
\begin{proof}
For any $x\in\mathrm{int}(X)$, from the definition of $\dnorm{\cdot}^{*}_x$, we have:
\begin{align*}
\dnorm{A}^{*}_{x} &= \sup\left\{ [v^TA\nabla^2F(x)^{-1}A^Tv]^{1/2} ~:~ \norm{v}_2 = 1\right\}\\
& = \sup\left\{ \norm{u}^{*}_x ~:~ u = A^Tv, ~\norm{v}_2 = 1\right\}\\
&\leq \sup \left\{ (\nu+2\sqrt{\nu})\norm{u}^{*}_{x^c} ~:~ u = A^Tv, ~\norm{v}_2 = 1\right\}\\
&= (\nu+2\sqrt{\nu})\sup \left\{ \left[v^TA\nabla^2F(x^c)^{-1}A^Tv\right]^{1/2}, ~\norm{v}_2 = 1\right\}\\
& = (\nu+2\sqrt{\nu})\dnorm{A}^{*}_{x^c}.
\end{align*}
Here, the inequality in this implication follows from \cite[Corollary 4.2.1]{Nesterov2004}. 
By substituting $x = x^{*}(y;t)$ into the above inequality, we obtain the first conclusion.
In order to prove the second bound, we note that $\nabla{g}(y;t ) = Ax^{*}(y;t) - b$. Therefore, by using \eqref{eq:F_upper}, we can estimate:
\begin{align*}
\norm{\nabla{g}(y;t)}_2 &= \norm{Ax^{*}(y;t) - b}_2 \leq \norm{A(x^{*}(y;t) - x^c)}_2 + \norm{Ax^c - b}_2\\
&\leq \dnorm{A}^{*}_{x^c}\norm{x^{*}(y;t) - x^c}_{x^c} + \norm{Ax^c - b}_2\\
&\overset{\tiny\eqref{eq:F_upper}}{\leq} \kappa \dnorm{A}^{*}_{x^c} + \norm{Ax^c - b}_2,
\end{align*}
which is the second conclusion.
\eofproof
\end{proof}
%+ End of the proof.

Next, we show how to choose the step size $\alpha$ and the decrement $\Delta{t}$ such  that $g(y_{+};t_{+}) < g(y;t)$ in Lemma
\ref{le:gradient_method}. We note that $x^{*}(y;t_{+})$ is obtained by solving the subproblem \eqref{eq:g_yt_i} and the quantity $c_F := F(x^{*}(y;t_{+}))$ is
nonnegative and computable. By Lemma \ref{le:M_bound}, we see that:
\begin{equation}\label{eq:alpha_bounds}
\alpha(t) := \frac{t}{c_{A1}(c_{A1} + \lambda_1)} \geq \underline{\alpha}_0(t) := \frac{t}{\bar{c}_{A}(\bar{c}_{A} + \bar{\lambda})},
\end{equation}
which shows that $\alpha(t)$ is bounded away from zero.
We have the following estimate.

%% Lemma 6.
\begin{lemma}\label{le:choice_of_stepsize}
The step size $\alpha(t)$ defined by \eqref{eq:alpha_bounds} satisfies:
\begin{equation}\label{eq:gm_key_est2}
g(y_{+};t_{+}) \leq g(y;t) - t_{+}\omega\left(\frac{\lambda_1}{c_{A1}}\right) + \Delta{t}F(x^{*}_1).
\end{equation}
\end{lemma}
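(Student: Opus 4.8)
The plan is to insert the specific step size $\alpha = \alpha(t_{+}) = \frac{t_{+}}{c_{A1}(c_{A1}+\lambda_1)}$ from \eqref{eq:alpha_bounds} into the key estimate \eqref{eq:gm_key_est1} of Lemma \ref{le:gradient_method} and to show that the term inside the square brackets there collapses \emph{exactly} to $t_{+}\omega(\lambda_1/c_{A1}) - \Delta t\, F(x_1^{*})$. Hence the whole claim reduces to the scalar identity
\[
  \alpha\lambda_1^2 - t_{+}\,\omega^{*}\!\big(c_{A1}t_{+}^{-1}\alpha\lambda_1\big) \;=\; t_{+}\,\omega\!\Big(\tfrac{\lambda_1}{c_{A1}}\Big),
\]
together with the observation that the argument of $\omega^{*}$ stays strictly inside its domain $[0,1)$, so that \eqref{eq:g_bound} (and therefore Lemma \ref{le:gradient_method}) is applicable at the updated parameter $t_{+}$.

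First I would record the admissibility of the step. Since $y_{+}-y = -\alpha\nabla g(y;t_{+})$, we have $\norm{y_{+}-y}_2 = \alpha\lambda_1$, and with the above $\alpha$,
\[
  c_{A1}\norm{y_{+}-y}_2 \;=\; c_{A1}\alpha\lambda_1 \;=\; \tfrac{t_{+}\lambda_1}{c_{A1}+\lambda_1} \;<\; t_{+},
\]
so the requirement $c_{A1}\norm{\hat y - y}_2 < t_{+}$ in Lemma \ref{le:Lipschitz_type} holds, and \eqref{eq:gm_key_est1} is legitimate.

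Then comes the elementary computation. Writing $s := \lambda_1/c_{A1} \ge 0$, a direct substitution gives $\alpha\lambda_1^2 = \tfrac{t_{+}s^2}{1+s}$ and $c_{A1}t_{+}^{-1}\alpha\lambda_1 = \tfrac{s}{1+s}\in[0,1)$. Using $\omega^{*}(r) = -r-\ln(1-r)$ with $r = \tfrac{s}{1+s}$, so that $1-r = (1+s)^{-1}$, one gets $\omega^{*}\!\big(\tfrac{s}{1+s}\big) = \ln(1+s) - \tfrac{s}{1+s}$, whence
\[
  \alpha\lambda_1^2 - t_{+}\,\omega^{*}\!\Big(\tfrac{s}{1+s}\Big)
  \;=\; t_{+}\Big[\tfrac{s^2+s}{1+s} - \ln(1+s)\Big]
  \;=\; t_{+}\big[s - \ln(1+s)\big] \;=\; t_{+}\,\omega(s),
\]
because $\tfrac{s^2+s}{1+s} = s$. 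Substituting this identity back into \eqref{eq:gm_key_est1} turns the bracketed expression into $t_{+}\omega(\lambda_1/c_{A1}) - \Delta t\, F(x_1^{*})$, which is precisely \eqref{eq:gm_key_est2}.

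There is no serious obstacle: the statement is an algebraic corollary of Lemma \ref{le:gradient_method}, and the value of $\alpha$ in \eqref{eq:alpha_bounds} is exactly the one realizing the Fenchel duality between $\omega$ and $\omega^{*}$, which is why the right-hand side of \eqref{eq:gm_key_est1} simplifies so cleanly. The only points requiring care are the bookkeeping between $t$ and $t_{+}$ — the gradient step and the descent bound \eqref{eq:g_bound} are both taken at the updated parameter $t_{+}$, while the decrease is measured against $g(y;t)$ through \eqref{eq:lm32_proof1} — and the verification $\tfrac{s}{1+s}<1$ ensuring that $\omega^{*}$ is evaluated strictly inside $[0,1)$.
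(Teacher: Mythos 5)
Your proof is correct and follows essentially the same route as the paper: both reduce \eqref{eq:gm_key_est2} to the scalar identity $\alpha\lambda_1^2 - t_{+}\omega^{*}(c_{A1}t_{+}^{-1}\alpha\lambda_1) = t_{+}\omega(\lambda_1/c_{A1})$ at the chosen step size --- the paper by writing the difference as $\varphi(\alpha) = t_{+}[u+\ln(1-u)]\leq 0$ with equality at $u=0$, you by direct substitution with $s=\lambda_1/c_{A1}$. Your reading $\alpha = t_{+}/(c_{A1}(c_{A1}+\lambda_1))$ (with $t_{+}$ rather than the $t$ literally appearing in \eqref{eq:alpha_bounds}) is the one that makes the identity exact and matches Step 5 of Algorithm \ref{alg:gradient_method}, and your explicit check that $c_{A1}\norm{y_{+}-y}_2 = t_{+}\lambda_1/(c_{A1}+\lambda_1) < t_{+}$ makes precise an admissibility condition the paper leaves implicit.
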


%% Proof of Lemma 6.
\begin{proof}
Let $\varphi(\alpha) := \alpha\lambda^2_1 -  t_{+}\omega^{*}(c_{A1}t_{+}^{-1}\alpha\lambda_1) - t_{+}\omega(\lambda_1c_{A1}^{-1})$. We can simplify this
function as $\varphi(\alpha) = t_{+}[u + \ln(1 - u)]$, where $u := t_{+}^{-1}\lambda_1^2\alpha + t_{+}^{-1}c_{A1}\lambda_1\alpha - c_{A1}^{-1}\lambda_1$.
The function $\varphi(\alpha) \leq 0$ for all $u$ and $\varphi(\alpha) = 0$ at $u = 0$ which leads to $\alpha := \frac{t}{c_{A1}(c_{A1}+\lambda_1)}$.
\eofproof
\end{proof}
%% End of the proof.
Since $t_{+} = t - \Delta{t}$, if we choose  $\Delta{t} := \frac{t\omega(c_{A1}^{-1}\lambda_1)}{2[\omega(c_{A1}^{-1}\lambda_1) + F(x^{*}_1)]}$ then:
\begin{equation}\label{eq:gm_key_est3}
g(y_{+};t_{+}) \leq g(y;t) - \frac{t}{2}\omega(c_{A1}^{-1}\lambda_1).
\end{equation}
Therefore, the update rule for $t$ can be written as:
\begin{equation}\label{eq:t_update}
t_{+} := (1-\sigma)t, ~\mathrm{where}~ \sigma := \frac{\omega(c_{A1}^{-1}\lambda_1)}{2[\omega(c_{A1}^{-1}\lambda_1) + F(x^{*}_1)]} \in (0, 1).
\end{equation}

%%%%%%%%%%%%%%%%%%%%%%%%%%%%%%%%%%%%%%%%%%%%%%%%%%%%%%%%%%%%%%
%+ 4.2. The algorithm.
%%%%%%%%%%%%%%%%%%%%%%%%%%%%%%%%%%%%%%%%%%%%%%%%%%%%%%%%%%%%%%
\subsection{The algorithm}
Combing the above analysis, we can describe the path-following gradient decomposition method  is follows:

%+ Algorithm 1.
\begin{algorithm}{\textit{Path-following gradient decomposition}}\label{alg:gradient_method}
\newline
\noindent\textbf{Initialization:}
\begin{itemize}
\item[1.] Choose an initial value $t_0 > 0$ and tolerances $\varepsilon_t > 0$ and $\varepsilon_g > 0$.
\item[2.] Take an initial point $y^0\in\RR^m$ and solve \eqref{eq:dual_func} \textit{in parallel} to obtain
$x_0^{*} := x^{*}(y^0;t_0)$.
\item[3.] Compute $c_A^0 := \dnorm{A}_{x^{*}_0}^{*}$, $\lambda_0 := \norm{\nabla{g}(y^0;t_0)}_2$, $\omega_0 := \omega(\lambda_0/c_A^0)$ and $c_F^0 :=
F(x^{*}_0)$.
\end{itemize}
\noindent\textbf{Iteration:}~ For~$k = 0, 1, \cdots$, perform the following steps:
\begin{itemize}
\item[]\textit{Step 1:} Update the barrier parameter: $t_{k+1} := t_k(1-\sigma_k)$, where $\sigma_k := \frac{\omega_k}{2(\omega_k + c_F^k)}$.
\item[]\textit{Step 2:} Solve \eqref{eq:dual_func} \textit{in parallel} to obtain $x^{*}_k := x^{*}(y^k, t_{k+1})$. 
Then, form the gradient vector $\nabla{g}(y^k;t_{k+1}) := Ax^{*}_k - b$.
\item[]\textit{Step 3:} Compute $\lambda_{k+1} := \norm{\nabla{g}(y^k;t_{k+1})}_2$, $c_A^{k+1} := \dnorm{A}_{x^{*}_k}^{*}$, $\omega_{k+1} :=
\omega(\lambda_{k+1}/c_A^{k+1})$ and $c_F^{k+1} := F(x^{*}_k)$.
\item[]\textit{Step 4:} If $t \leq \varepsilon_t$ and $\lambda_k \leq \varepsilon$, then terminate.
\item[]\textit{Step 5:} Compute the step size $\alpha_{k+1} := \frac{t_{k+1}}{c_A^{k+1}(c_A^{k+1} + \lambda_{k+1})}$.
\item[]\textit{Step 6:} Update $y^{k+1}$ as:
\begin{equation*}
y^{k+1} := y^k - \alpha_{k+1}\nabla{g}(y^k, t_{k+1}).
\end{equation*}
\end{itemize}
\end{algorithm}
\newline
%%%%%%%%%%%%%%%%%%%%%%%%%%%%%%%%%%%%%%%%%%%%%%%%%%%%%%%%%%%%%%%%%%%%%%%%%
The main step of Algorithm \ref{alg:gradient_method} is Step 2, where we need to solve in parallel the primal subproblems. To form the gradient vector
$\nabla{g}(\cdot, t_{k+1})$, one can compute in parallel by multiplying column-blocks $A_i$ of $A$ by the solution $x_i^{*}(y^k, t_{k+1})$. This task only
requires local information to be exchanged between the current node and its neighbors.

From the update rule \eqref{eq:t_update} of $t$ we can see that $\sigma_k\to 0^{+}$  as $F(x^{*}_k)\to\infty$. This happens when the barrier function is
approaching the boundary of the feasible set $X$. Hence, the parameter $t$ is not decreased.
Let $\bar{c}_F$ be a sufficiently large positive constant. We can modify the update rule of $t$ as:
\begin{equation}
t_{k+1} := \begin{cases}
t_k(1-\frac{\omega_k}{2(\omega_k + c_F^k)}) &\mathrm{if}~ c_F^k \leq \bar{c}_F,\\
t_k &\mathrm{otherwise},
\end{cases}
\end{equation}
In this case, the sequence $\set{t_k}$ generated by  Algorithm \ref{alg:gradient_method} might not converge to zero.
Moreover, the step size $\alpha_k$ computed at Step 5 depends on the parameter $t_k$. If $t_k$ is small then Algorithm \ref{alg:gradient_method}
makes short steps toward a solution of \eqref{eq:dual_prob}.

%%%%%%%%%%%%%%%%%%%%%%%%%%%%%%%%%%%%%%%%%%%%%%%%%%%%%%%%
%+ 4.3. Convergence analysis.
%%%%%%%%%%%%%%%%%%%%%%%%%%%%%%%%%%%%%%%%%%%%%%%%%%%%%%%%
\subsection{Convergence analysis}
Let us assume that $\underline{t} = \inf_{k\geq 0}t_k > 0$. Then, the following theorem shows the convergence of Algorithm \ref{alg:gradient_method}.

%+ Theorem 3.1.
\begin{theorem}\label{th:gm_convergence}
Suppose that Assumptions \aref{as:A1} and \aref{as:A2} are
satisfied. Suppose further that the sequence $\{(y^k, t_k, \lambda_k)\}_{k\geq 0}$ generated by Algorithm \ref{alg:gradient_method} satisfies $\underline{t} :=
\inf_{k\geq 0}\{t_k\} > 0$. Then:
\begin{equation}\label{eq:gm_convergence}
\lim_{k\to\infty}\norm{\nabla g(y^k,t_{k+1})}_2 = 0.
\end{equation}
Consequently, there exists a limit point $y^{*}$ of $\{y^k\}$ such that $y^{*}$ is a solution of \eqref{eq:gm_dual_prob} at $t=\underline{t}$.
\end{theorem}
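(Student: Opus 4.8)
The plan is to run a monotone-convergence argument on the merit values $g(y^k;t_k)$: I would first extract a strict per-iteration decrease from the key estimates, conclude that these decrements are summable, and then convert the vanishing of the decrements into $\lambda_k\to 0$ using the uniform bound of Lemma \ref{le:M_bound}; finally I would use coercivity of $g(\cdot;t)$ to get a limit point and pass to the limit by continuity. Concretely, combining Lemma \ref{le:gradient_method} with the step size rule of Step~5 of Algorithm \ref{alg:gradient_method} (that is, Lemma \ref{le:choice_of_stepsize}) gives $g(y^{k+1};t_{k+1})\le g(y^k;t_k)-t_{k+1}\omega_{k+1}+\Delta t_k\, c_F^{k+1}$, and the update rule \eqref{eq:t_update} for $t$ — equivalently the prescribed choice of $\Delta t_k$ in Step~1 — is designed exactly to absorb the last term, as in the derivation of \eqref{eq:gm_key_est3}, so that
\begin{equation*}
g(y^{k+1};t_{k+1})\ \le\ g(y^k;t_k)-\tfrac12\, t_k\,\omega_{k+1},\qquad \omega_{k+1}=\omega\!\left(\lambda_{k+1}/c_A^{k+1}\right)\ge 0 .
\end{equation*}

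Hence $\{g(y^k;t_k)\}$ is nonincreasing. It is bounded below: taking a strictly feasible $\bar x$ as in Lemma \ref{le:barrier_smooth}, the estimate \eqref{eq:first_est} gives $g(y^k;t_k)\ge \phi(\bar x)-t_kF(\bar x)\ge \phi(\bar x)-t_0F(\bar x)$, using $0<t_k\le t_0$ and $F(\bar x)\ge 0$. Therefore $\{g(y^k;t_k)\}$ converges, the quantities $\tfrac12 t_k\omega_{k+1}\le g(y^k;t_k)-g(y^{k+1};t_{k+1})$ are summable, and $t_k\omega_{k+1}\to 0$. Invoking the standing hypothesis $\underline t=\inf_k t_k>0$ yields $\omega_{k+1}\to 0$; since $\omega$ is continuous and strictly increasing on $\RR_{+}$ with $\omega(0)=0$ and $\omega(\tau)\to\infty$, this forces $\lambda_{k+1}/c_A^{k+1}\to 0$, and then $c_A^{k+1}\le\bar c_A<\infty$ from Lemma \ref{le:M_bound} gives $\lambda_{k+1}=\norm{\nabla g(y^k;t_{k+1})}_2\to 0$, which is \eqref{eq:gm_convergence}.

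For the ``consequently'' part I would first show $\{y^k\}$ is bounded by a coercivity estimate that is uniform in $t\in(0,t_0]$. Fix a closed ball $K\subset\mathrm{int}(X)$ centered at $\bar x$; since $A$ is full row rank and $A\bar x-b=0$ with $\bar x\in\mathrm{int}(K)$, the compact convex set $\{Ax-b:x\in K\}$ contains a neighborhood of the origin, say $\{z:\norm{z}_2\le\varepsilon_0\}$ for some $\varepsilon_0>0$. Thus for every unit vector $d$ there is $x_d\in K$ with $Ax_d-b=\varepsilon_0 d$, and writing $y=\norm{y}_2 d$ we get $g(y;t)\ge \phi(x_d)+\varepsilon_0\norm{y}_2-tF(x_d)\ge C_0+\varepsilon_0\norm{y}_2$, where $C_0:=\min_{x\in K}\phi(x)-t_0\max_{x\in K}F(x)>-\infty$ because $\phi$ and $F$ are continuous on the compact set $K\subset\mathrm{int}(X)$. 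Combined with $g(y^k;t_k)\le g(y^0;t_0)$ this bounds $\norm{y^k}_2$, so a subsequence $y^{k_j}\to y^{*}$ exists; and since $\{t_k\}$ is nonincreasing and bounded below by $\underline t$ it converges, so in fact $t_k\downarrow\underline t$ and $t_{k_j+1}\to\underline t$. Finally, $\nabla g(\cdot;\cdot)=A\,x^{*}(\cdot;\cdot)-b$ is continuous on $\RR^m\times\RR_{++}$: on a compact neighborhood of $(y^{*},\underline t)$ with $t$ bounded away from $0$, the estimate $g(y;t)\ge \phi(\bar x)-tF(\bar x)$ combined with $F(x)\to+\infty$ at $\partial X$ confines all maximizers $x^{*}(y;t)$ of \eqref{eq:g_yt_i} to a common compact subset of $\mathrm{int}(X)$, on which the maximum theorem (using strict concavity, hence uniqueness) gives continuity of the solution map. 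Passing to the limit in $\nabla g(y^{k_j};t_{k_j+1})\to 0$ then yields $\nabla g(y^{*};\underline t)=0$, which by \eqref{eq:optimality_of_gm_dual_prob} means $y^{*}=y^{*}(\underline t)$ is the solution of \eqref{eq:gm_dual_prob} at $t=\underline t$.

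The monotone-convergence bookkeeping is routine. The two steps requiring care are: (i) verifying that the prescribed decrement $\Delta t_k$ really absorbs the additive $\Delta t_k c_F^{k+1}$ term so that the one-step inequality is a genuine decrease by at least $\tfrac12 t_k\omega_{k+1}$; and (ii) the final limit passage, where one must simultaneously control $t_{k+1}\to\underline t$ and exploit joint continuity of $\nabla g$, which in turn rests on the coercivity bound that guarantees a limit point of $\{y^k\}$ exists in the first place — this last point I expect to be the main obstacle.
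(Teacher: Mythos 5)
Your argument for \eqref{eq:gm_convergence} is essentially the paper's: telescope the per-iteration decrease \eqref{eq:gm_key_est3}, bound the sum by $g(y^0;t_0)$ minus a lower bound on $g(y^{k+1};t_{k+1})$, use $\inf_k t_k=\underline{t}>0$ and $c_A^{k+1}\le\bar{c}_A$ from Lemma \ref{le:M_bound} to conclude $\omega_{k+1}\to 0$ and hence $\lambda_{k+1}\to 0$. Two points where you go beyond (and in one case improve on) the paper are worth noting. First, the paper lower-bounds $g(y^{k+1};t_{k+1})$ by $g^{*}$, which is not justified: since $g(y;t)\le g(y)$ and $g^{*}(t)\le g^{*}$ by Lemma \ref{le:properties_of_d}, the smoothed dual value can drop below $g^{*}$. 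Your bound $g(y^k;t_k)\ge\phi(\bar{x})-t_0F(\bar{x})$ from \eqref{eq:first_est} is the correct fix and costs nothing. Second, the paper dismisses the ``consequently'' part with ``it is sufficient to prove \eqref{eq:gm_convergence}'', whereas you actually supply the missing ingredients: a coercivity estimate (using full row rank of $A$ and a compact ball in $\mathrm{int}(X)$ around the Slater point) to get boundedness of $\{y^k\}$, monotone convergence $t_k\downarrow\underline{t}$, and joint continuity of $x^{*}(\cdot;\cdot)$ via confinement of the maximizers to a compact subset of $\mathrm{int}(X)$ plus the maximum theorem. That extra work is exactly what is needed to make the final claim rigorous, so your write-up is both correct and more complete than the published proof.
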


%+ Proof of Theorem 3.1.
\begin{proof}
It is sufficient to prove \eqref{eq:gm_convergence}.
Indeed, from \eqref{eq:gm_key_est3} we have:
\begin{equation*}
\sum_{i=0}^k\frac{t_{k}}{2}\omega(\lambda_{k+1}/c_A^{k+1}) \leq g(y^0;t_0) - g(y^{k+1};t_{k+1}) \leq g(y^0;t_0)- g^{*}.
\end{equation*}
Since $t_k \geq \underline{t} > 0$ and $c_A^{k+1} \leq \bar{c}_A$ due to Lemma \ref{le:M_bound}, the above inequality leads to:
\begin{equation*}
\frac{\underline{t}}{2}\sum_{i=0}^{\infty}\omega(\lambda_{k+1}/\bar{c}_A) \leq  g(y^0;t_0) - g^{*} < +\infty.
\end{equation*}
This inequality implies
$\lim_{k\to\infty}\omega(\lambda_{k+1}/\bar{c}_A) = 0$, which leads to $\lim_{k\to\infty}\lambda_{k+1} = 0$. By definition of $\lambda_k$
we have $\lim_{k\to\infty}\norm{\nabla{g}(y^k;t_{k+1})}_2 = 0$.
\eofproof
\end{proof}
%+ End of the proof.

%+ Remark 4.1.
\begin{remark}\label{re:step_size}
From the proof of Theorem \ref{th:gm_convergence}, we can  fix $c_A^k \equiv \bar{c} := \kappa\dnorm{A}^{*}_{x^c}$ in Algorithm \ref{alg:gradient_method}.
This value can be computed a priori.
\end{remark}

%%%%%%%%%%%%%%%%%%%%%%%%%%%%%%%%%%%%%%%%%%%%%%%%%%%%%%%%
%+ 4.4. Local convergence rate.
%%%%%%%%%%%%%%%%%%%%%%%%%%%%%%%%%%%%%%%%%%%%%%%%%%%%%%%%
%%%%%%%%%%%%%%%%%%%%%%%%%%%%%%%%%%%%%%%%%%%%%%%%%%%%%%%%
%+ 8.1.5. Local convergence rate.
%%%%%%%%%%%%%%%%%%%%%%%%%%%%%%%%%%%%%%%%%%%%%%%%%%%%%%%%%%%%
\subsection{Local convergence rate}
Let us analyze the local convergence rate of Algorithm \ref{alg:gradient_method}. 
Let $y^0$ be an initial point of Algorithm \ref{alg:gradient_method} and $y^{*}(t)$ be the unique solution of \eqref{eq:gm_dual_prob}. We
denote by:
\begin{equation}\label{eq:r0_quantity}
r_0(t) := \norm{y^0 - y^{*}(t)}_2.
\end{equation}
For simplicity of discussion, we assume that the smoothness parameter $t_k$ is fixed  at $\underline{t} > 0$ sufficiently small for all $k\geq 0$ (see Lemma
\ref{le:barrier_smooth}). The convergence rate of Algorithm \ref{alg:gradient_method} in the case $t_k = \underline{t}$ is stated in the following
lemma.

%+ Lemma 8.1.5.
\begin{lemma}[\textit{Local convergence rate}]\label{le:convergene_rate}
Suppose that the initial point $y^0$ is chosen  such that $g(y^0;\underline{t}) - g^{*}(\underline{t}) \leq\frac{3\bar{c}_A}{2r_0(\underline{t})}$. Then:
\begin{equation}\label{eq:convergence_rate}
g(y^k;\underline{t}) - g^{*}(\underline{t}) \leq \frac{12\bar{c}_A^2r_0(\underline{t})^2}{16\bar{c}_Ar_0(\underline{t}) + 3\underline{t}k}.
\end{equation}
Consequently, the local convergence rate of Algorithm \ref{alg:gradient_method} is at least
$O\left(\frac{4\bar{c}_A^2r_0(\underline{t})^2}{\underline{t}k}\right)$.
\end{lemma}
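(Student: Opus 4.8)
The plan is to reduce everything to the classical ``$1/k$'' gradient-descent argument, using the per-iteration descent estimate of Lemma \ref{le:choice_of_stepsize} specialised to the frozen-parameter regime $t_k\equiv\underline{t}$. Throughout write $\Delta_k := g(y^k;\underline{t}) - g^{*}(\underline{t})\ge 0$, $\lambda_{k+1} := \norm{\nabla g(y^k;\underline{t})}_2$ and $r_k := \norm{y^k - y^{*}(\underline{t})}_2$. Since here $\Delta{t}=0$, the bound \eqref{eq:gm_key_est2} collapses to $\Delta_{k+1} \le \Delta_k - \underline{t}\,\omega\big(\lambda_{k+1}/c_A^{k+1}\big)$; in particular $\{\Delta_k\}$ is non-increasing, and $c_A^{k+1}\le\bar{c}_A$ by Lemma \ref{le:M_bound}.

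Next I would show that the iterates never leave the ball of radius $r_0(\underline{t})$ around $y^{*}(\underline{t})$, i.e. $r_{k+1}\le r_k$ for all $k$. Expanding $r_{k+1}^2 = r_k^2 - 2\alpha_{k+1}\iprod{\nabla g(y^k;\underline{t}),\, y^k - y^{*}(\underline{t})} + \alpha_{k+1}^2\lambda_{k+1}^2$ and applying \eqref{eq:gm_Lipschitz} with $\hat{y} = y^{*}(\underline{t})$ — where $\nabla g(y^{*}(\underline{t});\underline{t})=0$ by \eqref{eq:optimality_of_gm_dual_prob} and the relevant constant is exactly $c_A^{k+1}$ — together with the definition \eqref{eq:alpha_bounds} of $\alpha_{k+1}$, one gets $\iprod{\nabla g(y^k;\underline{t}),\, y^k - y^{*}(\underline{t})}\ge \alpha_{k+1}\lambda_{k+1}^2$, hence $r_{k+1}^2 \le r_k^2 - \alpha_{k+1}^2\lambda_{k+1}^2 \le r_k^2$. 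Thus $r_k\le r_0(\underline{t})$ for every $k$; this monotonicity is what lets $r_0(\underline{t})$, rather than an a priori uncontrolled $r_k$, appear in the final bound.

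Now I would convert the descent inequality into a recursion in $\Delta_k$ alone. Convexity of $g(\cdot;\underline{t})$ and Cauchy–Schwarz give $\Delta_k \le \iprod{\nabla g(y^k;\underline{t}),\, y^k - y^{*}(\underline{t})} \le \lambda_{k+1} r_k \le \lambda_{k+1}\, r_0(\underline{t})$, so $\lambda_{k+1}\ge \Delta_k / r_0(\underline{t})$. Combining $c_A^{k+1}\le\bar{c}_A$, monotonicity of $\omega$, the elementary inequality $\omega(s)\ge \tfrac{s^2}{2(1+s)}$ for $s\ge 0$, and a crude bound on $\lambda_{k+1}$ (via Lemma \ref{le:M_bound}, keeping $\bar{c}_A+\lambda_{k+1}$ within a fixed multiple of $\bar{c}_A$ in the local regime), one obtains $\underline{t}\,\omega(\lambda_{k+1}/c_A^{k+1}) \ge \gamma\underline{t}\,\lambda_{k+1}^2 \ge \tfrac{\gamma\underline{t}}{r_0(\underline{t})^2}\,\Delta_k^2$ for an explicit $\gamma$ of order $1/\bar{c}_A^2$, and therefore $\Delta_{k+1}\le \Delta_k - \tfrac{\gamma\underline{t}}{r_0(\underline{t})^2}\,\Delta_k^2$.

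Finally, dividing this recursion by $\Delta_k\Delta_{k+1}>0$ (the case $\Delta_k=0$ being trivial) and using $\Delta_{k+1}\le\Delta_k$ yields $\tfrac{1}{\Delta_{k+1}}\ge \tfrac{1}{\Delta_k} + \tfrac{\gamma\underline{t}}{r_0(\underline{t})^2}$; telescoping from $0$ to $k$ and bounding $\tfrac{1}{\Delta_0}$ from below by means of the standing hypothesis $\Delta_0\le \tfrac{3\bar{c}_A}{2r_0(\underline{t})}$ gives a lower bound on $\tfrac{1}{\Delta_k}$ which, after clearing denominators, is the reciprocal of the right-hand side of \eqref{eq:convergence_rate}; dropping the $O(1)$ term in the denominator then produces the stated $O\big(4\bar{c}_A^2 r_0(\underline{t})^2/(\underline{t}k)\big)$ rate. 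I expect the main obstacle to be the third step: turning the self-concordant quantity $\omega(\lambda_{k+1}/c_A^{k+1})$ into a genuine quadratic lower bound in $\Delta_k$ with the precise numerical constants of \eqref{eq:convergence_rate} — this is exactly where the ``local'' smallness hypothesis is essential, since $\omega(s)\approx s^2/2$ only once $s$ is controlled, and matching the literal constants $12,16,3$ is a matter of which elementary $\omega$-bound and which bound on $\lambda_{k+1}$ one uses, whereas the robust content is the displayed one-step contraction and its telescoping.
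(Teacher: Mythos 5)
Your proposal is correct and follows essentially the same route as the paper: the one-step descent $\Delta_{k+1}\le\Delta_k-\underline{t}\,\omega(\lambda_{k+1}/c_A^{k+1})$, the convexity bound $\lambda_{k+1}\ge\Delta_k/r_0(\underline{t})$, a quadratic lower bound on $\omega$ in the local regime, and the telescoped recursion for $1/\Delta_k$; the only discrepancies are in the literal constants, which you rightly flag as depending on which elementary bound on $\omega$ one uses. Your explicit verification that $\norm{y^k-y^{*}(\underline{t})}_2\le r_0(\underline{t})$ via \eqref{eq:gm_Lipschitz} is in fact a welcome addition, since the paper silently replaces $\norm{y^k-\underline{y}^{*}}_2$ by $\norm{y^0-\underline{y}^{*}}_2$ without justification.
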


%+ Proof of Lemma 4.5.
\begin{proof}
Let $\Delta_k := g(y^{k};\underline{t}) - g^{*}(\underline{t})$ and $\underline{y}^{*} := y^{*}(\underline{t})$. Then $\Delta_k\geq 0$.
First, by the convexity of $g(\cdot;\underline{t})$ we have:
\begin{align*}
\Delta_k = g(y^{k};\underline{t}) - g^{*}(\underline{t}) \leq \norm{\nabla{g}(y^k,\underline{t})}_2\norm{y^k - \underline{y}^{*}}_2 =
\underline{\lambda}_k\norm{y^0 - \underline{y}^{*}}_2 \leq \underline{\lambda}_kr_0(\underline{t}). 
\end{align*}
This inequality implies:
\begin{equation}\label{eq:lm46_proof1}
\underline{\lambda}_k \geq r_0(\underline{t})^{-1}\Delta_k.
\end{equation}
Since $t_k=\underline{t}>0$ is fixed for all $k\geq 0$, it follows from \eqref{eq:gm_key_est1} that:
\begin{equation*}
g(y^{k+1};\underline{t}) \leq g(y^k;\underline{t}) - \underline{t}\omega(\underline{\lambda}_k/\underline{c}_A^k), 
\end{equation*}
where $\underline{\lambda}_k := \norm{\nabla{g}(y^k;\underline{t})}_2$ and $\underline{c}_A^k := \dnorm{A}_{x^{*}(y^k;\underline{t})}^{*}$. 
By using the definition of $\Delta_k$, the last inequality is equivalent to:
\begin{equation}\label{eq:lm46_proof2}
\Delta_{k+1} \leq \Delta_k - \underline{t}\omega(\underline{\lambda}_k/\underline{c}_A^k).
\end{equation}
Next, since $\omega(\tau) \geq \tau^2/2 - \tau^3/3 \geq \tau^2/4$ for all $0\leq \tau \leq 3/4$ and $\underline{c}_A^k\leq \bar{c}_A$ due to Lemma
\ref{le:M_bound}, it follows from \eqref{eq:lm46_proof1} and \eqref{eq:lm46_proof2} that:
\begin{equation}\label{eq:lm46_proof3}
\Delta_{k+1} \leq \Delta_k - (\underline{t}\Delta_k^2) / (4r_0(\underline{t})^2\bar{c}_A^2),
\end{equation}
for all $\Delta_k \leq 3\bar{c}_Ar_0(\underline{t})/4$.

Let $\eta := \underline{t}/(4r_0(\underline{t})^2\bar{c}_A^2)$. Since $\Delta_k\geq 0$,  \eqref{eq:lm46_proof3} implies:
\begin{align*}
\frac{1}{\Delta_{k+1}} \geq \frac{1}{\Delta_k(1-\eta\Delta_k)} = \frac{1}{\Delta_k} + \frac{\eta}{(1-\eta\Delta_k)} \geq \frac{1}{\Delta_k} + \eta. 
\end{align*}
By induction, this inequality leads to $\frac{1}{\Delta_k} \geq \frac{1}{\Delta_0} + \eta k$ which is equivalent to $\Delta_k \leq
\frac{\Delta_0}{1+\eta\Delta_0k}$ provided that $\Delta_0 \leq 3\bar{c}_Ar_0(\underline{t})/4$. Since $\eta :=
\underline{t}/(4r_0(\underline{t})^2\bar{c}_A^2)$, this inequality is indeed \eqref{eq:convergence_rate}.
The last conclusion follows from \eqref{eq:convergence_rate}.
\eofproof
\end{proof}
%+ End of the proof.

%%%%%%%%%%%%%%%%%%%%%%%%%%%%%%%%%%%%%%%%%%%%%%%%%%%%%%%%%%%%%%%%%%%%%%%%%%%%%%%%%%%%%%%%%%
% 8.2. An Accelerating Gradient Algorithm.
%%%%%%%%%%%%%%%%%%%%%%%%%%%%%%%%%%%%%%%%%%%%%%%%%%%%%%%%%%%%%%%%%%%%%%%%%%%%%%%%%%%%%%%%%%
\section{Fast gradient decomposition algorithm}\label{sec:fast_grad_method}
Let us fix $t = \underline{t} > 0$. The function $g(\cdot; \underline{t})$ is convex and differentiable
but its gradient is not Lipschitz continuous, we can not apply Nesterov's fast gradient algorithm \cite{Nesterov2004} to solve
\eqref{eq:gm_dual_prob}.
In this section, we modify Nesterov's fast gradient method in order to obtain an accelerating gradient method for solving
\eqref{eq:gm_dual_prob}.

One step of the modified fast gradient method is described as follows.
Let $y$ and $v$ be given points in $\in\RR^m$, we compute new points $y_{+}$ and $v_{+}$ as follows:
\begin{equation}\label{eq:fast_grad_scheme}
\begin{cases}
y_{+} := v - \alpha \nabla{g}(v;\underline{t}),\\
v_{+} = \beta_1y_{+} + \beta_2y + \beta_3v,
\end{cases}
\end{equation}
where $\alpha := \underline{t}/(\bar{c}_A(\bar{c}_A + \lambda))$ is the step size, $\beta_1$, $\beta_2$ and $\beta_3$ are three parameters which will be chosen
appropriately.
First, we prove the following estimate.

%+ Lemma 8.2.1.
\begin{lemma}\label{le:accelerating_scheme}
Let $\theta\in (0, 1)$ be a given parameter and $\rho := t/(2\theta\bar{c}_A^2)$. We define two vectors:
\begin{equation}\label{eq:r_r_+_quantities}
r := \theta^{-1}[v - (1-\theta)y] ~~\textrm{and}~~ r_{+} = r - \rho\nabla{g(v; \underline{t})}.
\end{equation}
Then the new point $y_{+}$  generated by \eqref{eq:fast_grad_scheme} satisfies:
\begin{equation}\label{eq:main_est}
\theta^{-2}\big(g(y_{+};\underline{t}) - \underline{g}^{*}\big) + \underline{t}^{-1}\bar{c}_A^2\norm{r_{+} - \underline{y}^{*}}_2^2 \leq
\theta^{-2}(1 - \theta)\big(g(y;\underline{t}) - \underline{g}^{*}\big)  + \underline{t}^{-1}\bar{c}_A^2\norm{r  - \underline{y}^{*}}_2^2,
\end{equation}
provided that $\norm{\nabla{g}(v;\underline{t})}_2\leq 3\bar{c}_A/4$,  where $\underline{y}^{*} := y^{*}(\underline{t})$ and $\underline{g}^{*} :=
g(\underline{y}^{*};\underline{t})$.
\end{lemma}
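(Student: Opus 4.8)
\medskip
\noindent\emph{Proof proposal.} The plan is to run the estimate-sequence (linear-coupling) argument behind Nesterov's accelerated gradient method, with the Euclidean Lipschitz descent inequality replaced by the self-concordant estimate \eqref{eq:g_bound}. Throughout I would write $h := \nabla g(v;\underline{t})$, $g^{*} := \underline{g}^{*}$, $y^{*} := \underline{y}^{*}$, and note that the parameter $\rho = \underline{t}/(2\theta\bar{c}_A^2)$ is tuned to match the coefficient $\underline{t}^{-1}\bar{c}_A^2$ of the squared distance. The first ingredient I need is a quadratic decrease at $y_{+}$: applying \eqref{eq:g_bound} with $\hat{y} = y_{+} = v - \alpha h$ and using $c_A := \dnorm{A}^{*}_{x^{*}(v;\underline{t})} \le \bar{c}_A$ from Lemma \ref{le:M_bound}, the one-dimensional computation already carried out in the proof of Lemma \ref{le:choice_of_stepsize} (with $c_{A1}$ replaced by its upper bound $\bar{c}_A$) gives $g(y_{+};\underline{t}) \le g(v;\underline{t}) - \underline{t}\,\omega(\norm{h}_2/\bar{c}_A)$; then the hypothesis $\norm{h}_2 \le 3\bar{c}_A/4$ lets me pass to $g(y_{+};\underline{t}) \le g(v;\underline{t}) - \tfrac{\underline{t}}{4\bar{c}_A^2}\norm{h}_2^2$ via the elementary bound $\omega(\tau)\ge\tau^2/4$ on $[0,3/4]$.

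The second ingredient is a convexity coupling. I would split $g(v;\underline{t}) = (1-\theta)g(v;\underline{t}) + \theta g(v;\underline{t})$ and bound each copy from above by the first-order inequality for the convex function $g(\cdot;\underline{t})$ at $v$, evaluated at $y$ and at $y^{*}$ respectively:
\[ g(v;\underline{t}) \le (1-\theta)\big[g(y;\underline{t}) + h^{T}(v-y)\big] + \theta\big[g^{*} + h^{T}(v - y^{*})\big]. \]
Since the definition $r = \theta^{-1}[v - (1-\theta)y]$ is exactly the identity $v = (1-\theta)y + \theta r$, the two directions combine as $(1-\theta)(v-y) + \theta(v-y^{*}) = v - (1-\theta)y - \theta y^{*} = \theta(r - y^{*})$, so the display collapses to $g(v;\underline{t}) \le (1-\theta)g(y;\underline{t}) + \theta g^{*} + \theta\,h^{T}(r-y^{*})$. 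Chaining this with the decrease from the first ingredient and subtracting $g^{*}$ would yield
\[ g(y_{+};\underline{t}) - g^{*} \le (1-\theta)\big(g(y;\underline{t})-g^{*}\big) + \theta\,h^{T}(r-y^{*}) - \tfrac{\underline{t}}{4\bar{c}_A^2}\norm{h}_2^2. \]

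Finally I would complete the square in the update $r_{+} = r - \rho h$: from $\norm{r_{+}-y^{*}}_2^2 = \norm{r-y^{*}}_2^2 - 2\rho\,h^{T}(r-y^{*}) + \rho^2\norm{h}_2^2$ I get $\theta\,h^{T}(r-y^{*}) = \tfrac{\theta}{2\rho}\big(\norm{r-y^{*}}_2^2 - \norm{r_{+}-y^{*}}_2^2\big) + \tfrac{\theta\rho}{2}\norm{h}_2^2$. Dividing the previous inequality by $\theta^{2}$ and substituting, the choice $\rho = \underline{t}/(2\theta\bar{c}_A^2)$ makes $\theta/(2\rho\theta^{2}) = \underline{t}^{-1}\bar{c}_A^2$ — exactly the coefficient appearing in \eqref{eq:main_est} — while $\tfrac{\theta\rho}{2\theta^{2}}\norm{h}_2^2 = \tfrac{\underline{t}}{4\theta^{2}\bar{c}_A^2}\norm{h}_2^2$ cancels the $\norm{h}_2^2$ term contributed by the first ingredient; rearranging then gives \eqref{eq:main_est}. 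The argument is essentially bookkeeping and there is no deep obstacle; the one point that needs care is the first ingredient, namely extracting the explicit constant $\underline{t}/(4\bar{c}_A^2)$ from the local-norm estimate \eqref{eq:g_bound}, which is precisely where the hypothesis $\norm{\nabla g(v;\underline{t})}_2 \le 3\bar{c}_A/4$ is used. One should also keep in mind that the prescribed $\rho$ and the coefficient $\underline{t}^{-1}\bar{c}_A^2$ are not free: they are reverse-engineered so that the two $\norm{h}_2^2$ terms cancel and the squared-distance coefficient is preserved, i.e. so that the potential $\theta^{-2}\big(g(\cdot;\underline{t})-g^{*}\big) + \underline{t}^{-1}\bar{c}_A^2\norm{\cdot - \underline{y}^{*}}_2^2$ decreases exactly by the amount absorbed into the $(1-\theta)$ rescaling.
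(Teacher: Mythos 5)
Your proposal is correct and follows essentially the same route as the paper's proof: the descent estimate $g(y_{+};\underline{t})\le g(v;\underline{t})-\tfrac{\underline{t}}{4\bar{c}_A^2}\norm{\nabla g(v;\underline{t})}_2^2$ obtained from \eqref{eq:g_bound} and $\omega(\tau)\ge\tau^2/4$ on $[0,3/4]$, the convexity coupling at $v$ toward $(1-\theta)y+\theta\underline{y}^{*}$, and completing the square in $r_{+}=r-\rho\nabla g(v;\underline{t})$ with $\rho$ tuned to cancel the gradient-norm terms. The only cosmetic difference is that the paper applies the gradient inequality once at the single point $u=(1-\theta)y+\theta\underline{y}^{*}$ and then uses convexity of $g(\cdot;\underline{t})$, whereas you apply it separately at $y$ and $\underline{y}^{*}$ and recombine; the resulting bound is identical.
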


%+ The proof of Lemma 8.2.1.
\begin{proof}
Since $y_{+} = v - \alpha\nabla{g(v; \underline{t})}$ and $\alpha =  \frac{\underline{t}}{\bar{c}_A(\bar{c}_A+\lambda)}$, it follows from \eqref{eq:g_bound}
that:
\begin{align}\label{eq:lm47_proof1}
g(y_{+};\underline{t}) &\leq g(v;\underline{t}) - \underline{t}\omega(\norm{\nabla{g}(v;\underline{t})}_2/\bar{c}_A).
\end{align}
Now, since $\omega(\tau) \geq \tau^2/4$ for all $0 \leq \tau \leq 3/4$, the inequality \eqref{eq:lm47_proof1} implies:
\begin{align}\label{eq:lm47_proof2}
g(y_{+};\underline{t}) &\leq g(v;\underline{t}) - \frac{\underline{t}}{4\bar{c}_A^2}\norm{\nabla{g(v; \underline{t})}}^2_2,
\end{align}
provided that $\norm{\nabla{g}(v;\underline{t})}_2\leq 3\bar{c}_A/4$.
For any $u := (1-\theta)y + \theta \underline{y}^{*}$ and $\theta \in (0, 1)$ we have:
\begin{align}\label{eq:lm47_proof2b}
g(v;\underline{t}) &\leq g(u;\underline{t}) + \nabla{g(v; \underline{t})}^T(v - u) \leq (1-\theta)g(y;\underline{t}) + \theta g(\underline{y}^{*};\underline{t})
\nonumber\\
& + \nabla{g(v; \underline{t})}^T(v - (1-\theta)y - \theta\underline{y}^{*}).
\end{align}
By substituting \eqref{eq:lm47_proof2b} and the relation $v - (1-\theta)y = \theta r$ into \eqref{eq:lm47_proof2} we obtain:
\begin{align}\label{eq:lm47_proof3}
g(y_{+};\underline{t}) & \leq (1 - \theta)g(y;\underline{t}) + \theta\underline{g}^{*} + \theta\nabla{g(v; \underline{t})}^T(r - \underline{y}^{*}) -
\frac{\underline{t}}{4\bar{c}_A^2}\norm{\nabla{g(v; \underline{t})}}^2_2 \nonumber\\
& = (1 - \theta)g(y;\underline{t}) + \theta\underline{g}^{*} + \frac{\theta^2\bar{c}_A^2}{\underline{t}} \Big[\norm{r - \underline{y}^{*}}_2^2
 - \norm{r - \frac{\underline{t}}{2\theta\bar{c}_A^2}\nabla{g(v; \underline{t})} -  \underline{y}^{*}}_2^2\Big] \nonumber\\
&= (1 - \theta)g(y;\underline{t}) + \theta\underline{g}^{*} + \frac{\theta^2\bar{c}_A^2} {\underline{t}}\Big[\norm{r-\underline{y}^{*}}_2^2
 - \norm{r_{+} - \underline{y}^{*}}_2^2\Big].
\end{align}
Since $1/\theta^2 = (1-\theta)/\theta^2 + 1/\theta$,  by rearranging \eqref{eq:lm47_proof3} we obtain \eqref{eq:main_est}.
\eofproof
\end{proof}
%+ End of the proof.
Next, we consider the update rule of $\theta$.
We can see from \eqref{eq:main_est} that if $\theta_{+}$ is updated such that $(1-\theta_{+})/\theta_{+}^2 = 1/\theta^2$ then $g(y_{+};\underline{t}) <
g(y;\underline{t})$. The above condition implies:
\begin{equation*}
\theta_{+} = 0.5\theta(\sqrt{\theta^2 + 4}-\theta). 
\end{equation*}
The following lemma provides an estimate for $k\geq 0$.

%%% Lemma 9.
\begin{lemma}\label{le:update_theta}
The sequence $\set{\theta_k}_{k\geq 0}$ generated by $\theta_{k+1} := 0.5\theta_k[(\theta_k^2 + 4)^{1/2} - \theta_k]$ and $\theta_0 = 1$ satisfies:
\begin{equation}\label{eq:update_theta}
\frac{1}{2k+1} \leq \theta_k \leq \frac{2}{k+2}, ~~\forall k\geq 0.
\end{equation}
\end{lemma}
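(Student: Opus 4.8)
The plan is to pass to the reciprocal sequence $u_k := 1/\theta_k$, which converts the nonlinear recurrence into a clean additive one. First I would verify by induction that $\theta_k \in (0,1]$ for every $k\geq 0$, so that $u_k$ is well defined with $u_k \geq 1$. Indeed $\theta_0 = 1$, and if $\theta_k \in (0,1]$ then $\sqrt{\theta_k^2+4} > \theta_k$ gives $\theta_{k+1} = \tfrac12\theta_k\big(\sqrt{\theta_k^2+4}-\theta_k\big) > 0$, while $\sqrt{\theta_k^2+4}\leq \theta_k+2$ (which squares to $0\leq 4\theta_k$) gives $\theta_{k+1}\leq \theta_k \leq 1$.

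Next I would record the algebraic meaning of the recurrence: $\theta_{k+1} = \tfrac12\theta_k\big((\theta_k^2+4)^{1/2}-\theta_k\big)$ is precisely the positive root of $\theta_{k+1}^2 + \theta_k^2\theta_{k+1} - \theta_k^2 = 0$, i.e. $(1-\theta_{k+1})/\theta_{k+1}^2 = 1/\theta_k^2$ (this is exactly the update rule motivating the choice of $\theta_{k+1}$ in the text). Dividing through by $\theta_{k+1}^2$ and rewriting in terms of $u$ yields $u_{k+1}^2 - u_{k+1} = u_k^2$, equivalently $u_{k+1} = \tfrac12\big(1+\sqrt{1+4u_k^2}\big)$.

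The two-sided bound then follows from elementary square-root estimates: since $2u_k = \sqrt{4u_k^2}\leq \sqrt{1+4u_k^2}\leq \sqrt{(2u_k+1)^2} = 2u_k+1$, I get $u_k + \tfrac12 \leq u_{k+1}\leq u_k + 1$. Telescoping from $u_0 = 1$ gives $1 + \tfrac{k}{2}\leq u_k \leq 1+k$ for all $k\geq 0$. Taking reciprocals and using $1+k\leq 2k+1$ (valid for $k\geq 0$) produces $\frac{1}{2k+1}\leq \frac{1}{1+k}\leq \theta_k = \frac{1}{u_k}\leq \frac{1}{1+k/2} = \frac{2}{k+2}$, which is \eqref{eq:update_theta}.

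I do not expect a genuine obstacle; the only points needing a little care are the equivalence between the stated closed form for $\theta_{k+1}$ and the additive recurrence $u_{k+1}^2 = u_k^2 + u_{k+1}$, and checking that the square-root bounds are tight enough to land exactly the per-step increments $\tfrac12$ and $1$. The rest is a routine induction and telescoping.
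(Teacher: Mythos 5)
Your proof is correct and takes essentially the same route as the paper's: both pass to the reciprocal sequence (the paper uses $s_k = 2/\theta_k$, you use $u_k = 1/\theta_k$), show it increases by an amount bounded between two constants at each step, and telescope from $\theta_0=1$. The only cosmetic difference is that you derive the additive recurrence via the identity $(1-\theta_{k+1})/\theta_{k+1}^2 = 1/\theta_k^2$, whereas the paper rationalizes the closed form to get $s_{k+1} = \sqrt{s_k^2+1}+1$ directly.
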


%% Proof of Lemma 9.
\begin{proof}
We note that $\theta_{k+1} = \frac{2}{\sqrt{1 + 4/\theta^2} + 1}$. If we define $s_k := 2/\theta_k$ then the last relation implies $\frac{2}{s_{k+1}} =
\frac{2}{\sqrt{s^2_k + 1} + 1}$, which leads to $\frac{2}{s_k + 2} < \frac{2}{s_{k+1}} < \frac{2}{s_k +1}$. Hence, $s_k + 1  < s_{k+1} < s_k + 2$. By
induction, we have $s_0 + k < s_k < s_0 + 2k$. More over, $\theta_0 = 1$, we have $s_0 = 2$. Substituting $s_0 = 2$ into the last inequalities and then using
the relation $s_k = 2/\theta_k$ we obtain \eqref{eq:update_theta}.
\eofproof
\end{proof}
%% End of the proof.

By Lemma \ref{le:accelerating_scheme}, we have $r_{+} = r - \rho\nabla{g(v; \underline{t})}$ and $r_{+} = \theta_{+}^{-1}(v_{+} - (1-\theta_{+})y_{+})$.
From these relations, we deduce that:
\begin{equation}\label{eq:second_step}
v_{+} = (1-\theta_{+})y_{+} + \theta_{+}(r-\rho\nabla{g(v; \underline{t})}).
\end{equation}
Note that if we combine \eqref{eq:second_step} and \eqref{eq:fast_grad_scheme} then:
\begin{equation*}
v_{+} = (1 - \theta_{+} - \alpha^{-1}\rho\theta_{+})y_{+} - \theta^{-1}\theta_{+}(1-\theta)y + \left(\theta^{-1} + \alpha^{-1}\rho\right)\theta_{+}v.
\end{equation*}
This is the second line of \eqref{eq:fast_grad_scheme}, where $\beta_1 := 1-\theta_{+} -\rho\theta_{+}\alpha^{-1}$, $\beta_2 :=
-(1-\theta)\theta_{+}\theta^{-1}$ and $\beta_3 := (\theta^{-1} + \rho\alpha^{-1})\theta_{+}$.
By combining all the above analysis, we can describe the modified fast gradient algorithm in detail as follows:

%+ Algorithm 16.
\begin{algorithm}{\textit{Fast gradient decomposition algorithm}}\label{alg:fast_grad_alg}
\newline
\noindent\textbf{Initialization:}~Perform the following steps:
\begin{itemize}
\item[1.] Given a tolerance $\varepsilon > 0$. Fix the parameter $t$ at a certain value $\underline{t} > 0$.
\item[2.] Find an initial point $y^0\in\RR^m$ such that $\lambda_0 := \norm{\nabla{g}(y^0;\underline{t})}_2 \leq \frac{3\bar{c}_A}{4}$.
\item[3.] Set $\theta_0 := 1$ and $v^0 := y^0$.
\end{itemize}
\noindent\textbf{Iteration:}~ 
For~$k=0,1,\cdots$, perform the following steps:
\begin{itemize}
\item[]\textit{Step 1:} If $\lambda_k \leq \varepsilon$ then terminate.
\item[]\textit{Step 2:} Compute $r^k := \theta_k^{-1}[v^k - (1-\theta_k)y^k]$.
\item[]\textit{Step 3:} Update $y^{k+1}$ as:
\begin{equation*}
y^{k+1} := v^k - \alpha_k\nabla{g}(v^k, \underline{t}),
\end{equation*}
where $\alpha_k = \frac{\underline{t}}{\bar{c}_A(\bar{c}_A + \lambda_k)}$.
\item[]\textit{Step 4:} Update $\theta_{k+1} := \frac{1}{2}\theta_k[(\theta_k^2 + 4)^{1/2} - \theta_k]$.
\item[]\textit{Step 5:} Update 
\begin{equation*}
v^{k+1} := (1-\theta_{k+1})y^{k+1} + \theta_{k+1}(r^k - \rho_k\nabla{g}(v^k)), 
\end{equation*}
where $\rho_k := \frac{\underline{t}}{2\bar{c}_A^2\theta_k}$.
\item[]\textit{Step 6:} Solve \eqref{eq:g_yt_i} \textit{in parallel} to obtain $x^{*}_{k+1} := x^{*}(v^{k+1}; \underline{t})$. 
Then form a gradient vector $\nabla{g}(v^{k+1};\underline{t}) := Ax^{*}_{k+1} - b$ and compute $\lambda_{k+1}  := \norm{\nabla{g}(v^{k+1};\underline{t})}_2$.
\end{itemize}
\end{algorithm}

The core step of Algorithm  \ref{alg:fast_grad_alg} is Step 6, where we need to solve $M$ primal subproblems in parallel. 
Algorithm \ref{alg:fast_grad_alg} differs from Nesterov's fast gradient algorithm \cite{Nesterov2004} at Step 5, where $v_{k+1}$ not only
depends on $y^{k+1}$ and $v^k$ but also on $\nabla{g}(v^k;\underbar{t})$.

The following theorem shows the convergence of Algorithm \ref{alg:fast_grad_alg}.

%+ Theorem 2.
\begin{theorem}\label{th:fast_grad_convergence}
Let $y^0\in\RR^m$ be an initial point of Algorithm \ref{alg:fast_grad_alg} such that $\norm{\nabla{g}(y^0;\underline{t})}_2 \leq \frac{3\bar{c}_A}{4}$.
Then the sequence $\{(y^k, v^k)\}_{k\geq 0}$ generated by Algorithm \ref{alg:fast_grad_alg} satisfies:
\begin{equation}\label{eq:main_est2}
g(y^k;\underline{t}) - \underline{g}^{*}(\underline{t}) \leq \frac{4\bar{c}_A^2}{\underline{t}(k+1)^2}\norm{y^0 - y^{*}(\underline{t})}^2.
\end{equation}
\end{theorem}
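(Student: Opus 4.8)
The plan is to turn the one-step estimate of Lemma~\ref{le:accelerating_scheme} into a telescoping (Lyapunov) argument, in the spirit of Nesterov's classical analysis, and then to invoke the lower/upper bounds on $\theta_k$ from Lemma~\ref{le:update_theta}.

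First I would apply Lemma~\ref{le:accelerating_scheme} at iteration $k$ with the identifications $\theta=\theta_k$, $y=y^k$, $v=v^k$, $y_{+}=y^{k+1}$, $r=r^k$, $r_{+}=r^{k+1}$; indeed Steps~2, 3 and 5 of Algorithm~\ref{alg:fast_grad_alg} are exactly the constructions used in that lemma, with $\rho_k=\underline{t}/(2\bar{c}_A^2\theta_k)$. This gives
\begin{equation*}
\theta_k^{-2}\big(g(y^{k+1};\underline{t})-\underline{g}^{*}\big)+\underline{t}^{-1}\bar{c}_A^2\norm{r^{k+1}-\underline{y}^{*}}_2^2 \leq \theta_k^{-2}(1-\theta_k)\big(g(y^k;\underline{t})-\underline{g}^{*}\big)+\underline{t}^{-1}\bar{c}_A^2\norm{r^k-\underline{y}^{*}}_2^2 .
\end{equation*}
The update $\theta_{k+1}=\tfrac12\theta_k[(\theta_k^2+4)^{1/2}-\theta_k]$ is equivalent to $(1-\theta_{k+1})/\theta_{k+1}^2=1/\theta_k^2$, so that $\theta_k^{-2}(1-\theta_k)=\theta_{k-1}^{-2}$ for every $k\geq 1$. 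Defining the Lyapunov sequence
\begin{equation*}
\Phi_k := \theta_{k-1}^{-2}\big(g(y^k;\underline{t})-\underline{g}^{*}\big)+\underline{t}^{-1}\bar{c}_A^2\norm{r^k-\underline{y}^{*}}_2^2,\qquad k\geq 1,
\end{equation*}
the displayed inequality reads $\Phi_{k+1}\leq\Phi_k$ for all $k\geq 1$, hence $\Phi_k\leq\Phi_1$.

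Next I would evaluate $\Phi_1$. Since $\theta_0=1$, the coefficient $(1-\theta_0)$ annihilates the first term on the right of the $k=0$ instance of the one-step estimate, and $r^0=\theta_0^{-1}[v^0-(1-\theta_0)y^0]=v^0=y^0$; therefore
\begin{equation*}
\Phi_1=\big(g(y^1;\underline{t})-\underline{g}^{*}\big)+\underline{t}^{-1}\bar{c}_A^2\norm{r^1-\underline{y}^{*}}_2^2 \leq \underline{t}^{-1}\bar{c}_A^2\norm{y^0-\underline{y}^{*}}_2^2 .
\end{equation*}
Combining $\Phi_k\leq\Phi_1$ with the nonnegativity of $\underline{t}^{-1}\bar{c}_A^2\norm{r^k-\underline{y}^{*}}_2^2$ yields $g(y^k;\underline{t})-\underline{g}^{*}\leq \theta_{k-1}^2\,\underline{t}^{-1}\bar{c}_A^2\norm{y^0-\underline{y}^{*}}_2^2$, and Lemma~\ref{le:update_theta} gives $\theta_{k-1}\leq 2/(k+1)$, i.e. $\theta_{k-1}^2\leq 4/(k+1)^2$, so \eqref{eq:main_est2} follows.

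The main obstacle is that Lemma~\ref{le:accelerating_scheme} is only valid under the proximity condition $\norm{\nabla g(v^k;\underline{t})}_2\leq 3\bar{c}_A/4$, which is assumed only at $k=0$. One must therefore argue, by induction on $k$, that this bound is preserved along the iterations — for instance by using the monotonicity of $\Phi_k$ to bound both $g(y^k;\underline{t})-\underline{g}^{*}$ and $\norm{r^k-\underline{y}^{*}}_2$ (hence the location of $v^k$ relative to $\underline{y}^{*}$), and then converting this into a bound on $\norm{\nabla g(v^k;\underline{t})}_2$ through the Lipschitz-type estimate underlying \eqref{eq:g_bound}. Making this "stay in the ball'' step rigorous, possibly at the price of restricting the admissible initial region or adding a safeguard, is the delicate part; the telescoping computation above is then routine.
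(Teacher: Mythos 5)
Your proof is essentially identical to the paper's: the same telescoping of the one-step estimate \eqref{eq:main_est} using the invariant $(1-\theta_{k+1})/\theta_{k+1}^2 = 1/\theta_k^2$, the same evaluation at $k=0$ where $\theta_0=1$ kills the first term and $r^0=v^0=y^0$, and the same bound $\theta_{k-1}\leq 2/(k+1)$ from Lemma~\ref{le:update_theta}. The caveat you raise --- that Lemma~\ref{le:accelerating_scheme} requires $\norm{\nabla g(v^k;\underline{t})}_2\leq 3\bar{c}_A/4$ at every iteration, not just at $k=0$ --- is a genuine point, but the paper's own proof silently assumes it as well (cf.\ the neighbourhood $\mathcal{R}(\bar{c}_A;\underline{t})$ in \eqref{eq:local_region} and Remark~\ref{re:complexity}), so your argument is no less complete than the original.
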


%% Proof of Theorem 2.
\begin{proof}
From \eqref{eq:main_est} and the update rule of $\theta_k$, we have:
\begin{align*}
\theta_{k}^{-2}(g(y^{k+1};\underline{t}) - \underline{g}^{*}) + \bar{c}_A^2\underline{t}^{-1}\norm{r^{k+1}-\underline{y}^{*}}^2_2 &\leq
\theta_k^{-2}(1-\theta_{k})(g(y^k;\underline{t}) - \underline{g}^{*}) + \bar{c}_A^2\underline{t}^{-1}\norm{r^k-\underline{y}^{*}}^2_2 \\
& \leq \theta_{k-1}^{-2}(g(y^k;\underline{t}) - \underline{g}^{*}) + \bar{c}_A^2\underline{t}^{-1}\norm{r^k-\underline{y}^{*}}^2_2 
\end{align*}
By induction, we obtain from this inequality that:
\begin{align*}
\theta_{k-1}^{-2}(g(y^k;\underline{t}) - \underline{g}^{*}) &\leq  \theta_0^{-2}(g(y^1;\underline{t}) - \underline{g}^{*}) +
\bar{c}_A^2\underline{t}^{-1}\norm{r^1-\underline{y}^{*}}^2_2 \\
& \leq (1-\theta_0)\theta_0^{-2}(g(y^0;\underline{t}) - \underline{g}^{*}) + \bar{c}_A^2\underline{t}^{-1}\norm{r^0-\underline{y}^{*}}^2_2, 
\end{align*}
for $k\geq 1$.
Since $\theta_0 = 1$ and $y^0 = v^0$, we have $r^0 = y^0$ and the last inequality implies $g(y^k;\underline{t}) - \underline{g}^{*} \leq
\bar{c}_A^2\theta_{k-1}^2\underline{t}^{-1}\norm{y^0 - \bar{y}}^2_2$.
Since $\theta_{k-1} \leq \frac{2}{k+1}$ due to Lemma \ref{le:update_theta}, we obtain \eqref{eq:main_est2}.
\eofproof
\end{proof}
%+ End of the proof.

Let us denote by:
\begin{equation}\label{eq:local_region}
\mathcal{R}(\bar{c}_A;\underline{t}) := \set{y^0\in\RR^m ~|~ \norm{\nabla{g}(y^0;\underline{t})}_2 \leq \frac{3\bar{c}_A}{4}}. 
\end{equation}
It is obvious that $y^{*}(\underline{t})\in \mathcal{R}(\bar{c}_A;\underline{t})$. This set is a neighbourhood of the solution $y^{*}(\underline{t})$ of the
problem \eqref{eq:gm_dual_prob}.

%+ Remark 8.2.1.
\begin{remark}\label{re:complexity}
Let $\varepsilon > 0$ be a given accuracy. If we fix the barrier parameter $\underline{t} := \varepsilon$ then the worst-case complexity of Algorithm
\ref{alg:fast_grad_alg} in the neighbourhood $\mathcal{R}(\bar{c}_A;\underline{t})$ is $O(\frac{2\bar{c}_A\underline{r}_0}{\varepsilon})$, where
$\underline{r}_0 := r_0(\underline{t})$.
\end{remark}

%+ Remark 8.2.2.
\begin{remark}(\textit{Switching strategy})\label{re:switching_variant}
We can combine Algorithms \ref{alg:gradient_method} and \ref{alg:fast_grad_alg}  to obtain a switching variant:
\begin{itemize}
\item First, we apply Algorithm \ref{alg:gradient_method} to find a point $\hat{y}^0\in\RR^m$ and $\underline{t} > 0$ such that
$\norm{\nabla{g}(\hat{y}^0;\underline{t})}_2\leq \frac{3\bar{c}_A}{4}$.
\item  Then, we switch to use Algorithm \ref{alg:fast_grad_alg}.
\end{itemize}
We can also replace the constant $\bar{c}_A$ in Algorithm \ref{alg:fast_grad_alg} by any upper bound $\hat{c}_A$ of $\underline{c}_k$. For instance, we
can choose $\hat{c}_A := \max\left\{\bar{c}_A, 4\norm{\nabla{g}(y^0;\underline{t})}_2/3\right\}$.
\end{remark}

% %%+ Remark 6.
% \begin{remark}{(\textit{Distributed implementation})}\label{re:distributed_impl}
% Algorithms \ref{alg:gradient_method} and \ref{alg:fast_grad_alg} can be implemented in a distributed manner. Indeed, we modify them as follows: 
% \begin{enumerate}
% \item We fix $t = \underline{t} > 0$ sufficiently small such that we do not perform the update of $t$ in the path-following manner.
% \item Instead of using the step size $\alpha(t)$ at Step 5 of Algorithm \ref{alg:gradient_method} and Step 3 of Algorithm \ref{alg:fast_grad_alg}, we can use
% $\underline{\alpha}(\underline{t})$ which is computed a priori.
% \item Instead of using the quantity $c_A^k$ in both algorithms, we can use its upper bound $\bar{c}_A$.
% This value is also computed a priori.
% \item The parameter $\theta_k$ can be computed at every subsystem by using the same formula as in Lemma \ref{le:update_theta}.
% \end{enumerate}
% \end{remark}
% 
%%%%%%%%%%%%%%%%%%%%%%%%%%%%%%%%%%%%%%%%%%%%%%%%%%%%%%%%%%%%%%%%%%%%%%%%%%%%%%%%%%%%%%%
%+ 8.3. Numerical tests.
%%%%%%%%%%%%%%%%%%%%%%%%%%%%%%%%%%%%%%%%%%%%%%%%%%%%%%%%%%%%%%%%%%%%%%%%%%%%%%%%%%%%%%%
\section{Numerical tests}\label{sec:numerical_tests}
In this section, we test the switching variant of Algorithms \ref{alg:gradient_method} and \ref{alg:fast_grad_alg} proposed in Remark \ref{re:switching_variant}
which we name by \texttt{PFGDA} for solving the following convex programming problem:
\begin{equation}\label{eq:basic_pursuit}
\begin{array}{cl}
\displaystyle\min_{x\in\RR^n} &\gamma\norm{x}_1 + f(x)\\
\mathrm{s.t.} & Ax = b, ~ l\leq x\leq u,
\end{array}
\end{equation}
where $f(x) := \sum_{i=1}^nf_i(x_i)$, and $f_i : \RR \to \RR$ is a convex function, $A \in \RR^{m\times n}$, $b \in \RR^m$ and $l, u \in \RR^n$ such that $l
\leq 0 < u$. 

We note that the feasible set $X := [l, u]$ can be decomposed into $n$ intervals $X_i := [l_i, u_i]$ and each interval is endowed with a $2$-self concordant
barrier $F_i(x_i) := -\ln(x_i-l_i) - \ln(u_i-x_i) + 2\ln((u_i-l_i)/2)$ for $i=1,\dots, n$. 
Moreover, if we define $\phi(x) := -\sum_{i=1}^n[f_i(x_i) + \gamma\abs{x_i}]$ then $\phi$ is concave and separable. 
Problem \eqref{eq:basic_pursuit} can be reformulated equivalently to \eqref{eq:sep_cp}.

The smoothed dual function components $g_i(y;t)$ of \eqref{eq:basic_pursuit} can be written as:
\begin{align*}
g_i(y;t) &=\max_{l_i < x_i < u_i}\left\{ -f_i(x_i) -\gamma\abs{x_i} + (A_i^Ty)x_i - tF_i(x_i) \right\} - b^Ty/n,
\end{align*}
for $i=1,\dots, n$. 
This one-variable minimization problem is nonsmooth but it can be solved easily. 
In particular, if $f_i$ is affine then this problem can be solved in a \textit{closed form}.
In case $f_i$ is smooth, we can reformulate \eqref{eq:basic_pursuit} into a smooth convex program by adding $n$ slack variables and $2n$ additional inequality
constraints to handle the $\norm{x}_1$ part.

We have implemented \texttt{PFGDA} in C++ running on a $16$ cores Intel \textregistered Xeon $2.7$GHz workstation with $12$ GB of RAM. 
The algorithm was parallelized by using \texttt{OpenMP}.
We terminated \texttt{PFGDA} if:
\begin{equation*}
\texttt{optim} := \norm{\nabla{g}(y^k;t_k)}_2/\max\set{1, \norm{\nabla{g}(y^0;t_0)}_2} \leq 10^{-3} ~\textrm{and} ~ t_k \leq 10^{-2}. 
\end{equation*}
We have also implemented two algorithms, namely \textit{decomposition algorithm with two primal steps} \cite[Algorithm 1]{TranDinh2012a} and
\textit{decomposition algorithm with two dual steps} in \cite[Algorithm 1]{TranDinh2012c} which we named \texttt{2pDecompAlg} and \texttt{2dDecompAlg},
respectively, for solving problem \eqref{eq:basic_pursuit} and compared them with \texttt{PFGDA}. 
We terminated \texttt{2pDecompAlg} and \texttt{2dDecompAlg} by using the same conditions as in \cite{TranDinh2012c,TranDinh2012a} with the tolerances
$\varepsilon_{\mathrm{feas}} = \varepsilon_{\mathrm{fun}} = \varepsilon_{\mathrm{obj}} = 10^{-3}$ and $j_{\max} = 3$.
We also terminated all three algorithms if the maximum number of iterations $\texttt{maxiter} := 10,000$ was reached.
In the last case we clarify that the algorithm is failed.

%% a. Basis pursuit problem.
\vskip0.2cm
\noindent\textbf{a. Basis pursuit problem. } 
If the function $f(x) \equiv 0$ for all $x$ then problem \eqref{eq:basic_pursuit} becomes a bound constrained basis pursuit
problem to recover the sparse coefficient vector $x$ of given signals based on a transform operator $A$ and a vector of observations $b$. 
We assume that $A\in\mathbb{R}^{m\times n}$, $b\in\mathbb{R}^m$ and $x\in\mathbb{R}^n$, where $m < n$ and $x$ has $k$ nonzero elements ($k\ll n)$.

In this case, we only illustrate \texttt{PFGDA} by applying it to solve some small size test problems.
In order to generate a test problem, we generate a random orthogonal matrix $A$ and a random vector $x_0$ which has $k$
nonzero elements. Then we define vector $b$ as $b := Ax_0$.

We test \texttt{PFGDA} on the four problems such that $[m, n, k]$ are $[50, 128, 14]$, $[100, 256, 20]$, $[200, 512, 30]$ and $[500, 1024, 50]$. 
The results reported by \texttt{PFGDA} are plotted in Figure \ref{fig:basis_pursuit}.
\begin{figure}[ht!]
\vskip-0.5cm
\centerline{\includegraphics[angle=0,height=8.0cm,width=12.1cm]{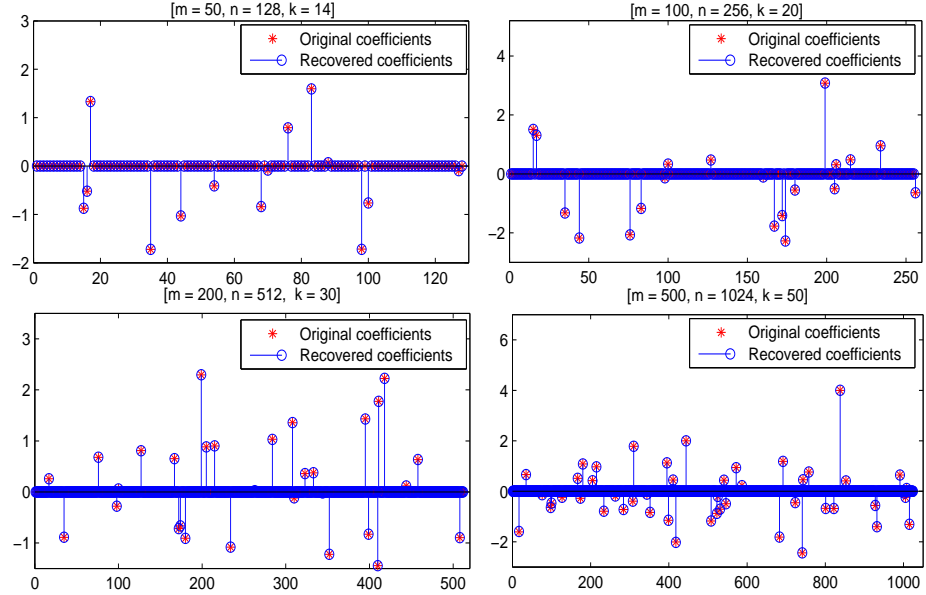}}
\caption{Illustration of \texttt{PFGDA} via the basis pursuit problem}\label{fig:basis_pursuit}
\vskip-0.5cm
\end{figure}

As we can see from these figures that the vector of recovered coefficients $x$ matches very well the vector of original coefficients $x_0$ in these four
problems. Moreover, \texttt{PFGDA} requires $376, 334, 297$ and $332$ iterations, respectively in the four problems.

%% b. Nonlinear separable convex problem.
\vskip0.2cm
\noindent\textbf{b. Nonlinear separable convex problems. }
In order to test the performance of \texttt{PFGDA}, we generate in this case a large test-set of problems and compare the performance of \texttt{PFGDA} with
\texttt{2pDecompAlg} and \texttt{2dDecompAlg}.

The test problems were generated as follows.
We chose the objective function $f_i(x_i) := e^{-\gamma_ix_i} - 1$, where $\gamma_i > 0$ is a given parameter for $i=1,\dots, n$.
Matrix $A$ was generated randomly in $[-1, 1]$ and then was normalized by $A/\norm{A}_{\infty}$.
We generated a sparse vector $x_0$ randomly in $[-2, 2]$ with the density $2\%$ and defined a vector $b := A\bar{x}$.
Vector $\gamma := (\gamma_1,\cdots,\gamma_n)^T$ was sparse and generated randomly in $[0, 0.5]$.
The lower bound $l_i$ and the upper bounds $u_i$ were set to $-3$ and $3$, respectively for all $i=1,\dots, n$.

We benchmarked three algorithms with performance profiles \cite{Dolan2002}. Recall that a performance profile is built based on a set $\mathcal{S}$ of $n_s$
algorithms (solvers) and a collection $\mathcal{P}$ of $n_p$ problems. Suppose that we build a profile based on computational time. We denote by
$T_{p,s} := \textit{computational time required to solve problem $p$ by solver $s$}$.
We compare the performance of algorithm $s$ on problem $p$ with the best performance of any algorithm on this problem; that is we compute the performance ratio
$r_{p,s} := \frac{T_{p,s}}{\min\{T_{p,\hat{s}} ~|~ \hat{s}\in \mathcal{S}\}}$.
Now, let $\rho_s(\tau) := \frac{1}{n_p}\mathrm{size}\left\{p\in\mathcal{P} ~|~ r_{p,s}\leq \tau\right\}$ for $\tau\in\mathbb{R}_{+}$. The function
$\rho_s:\mathbb{R}\to [0,1]$ is the probability for solver $s$ that a performance ratio is within a factor $\tau$ of the best possible ratio. We use the term
``performance profile'' for the distribution function $\rho_s$ of a performance metric.
We plotted the performance profiles in $\log$-scale, i.e. $\rho_s(\tau) := \frac{1}{n_p}\mathrm{size}\left\{p\in\mathcal{P} ~|~ \log_2(r_{p,s})\leq
\log_2\tau\right\}$.

We tested three algorithms on a collection of $50$ random problems with $m$ from $200$ to $1,500$ and $n$ from $1,000$ to $15,000$.
The profiles are plotted in Figure \ref{fig:perf_profile1}.
\begin{figure}[!ht]
\vskip-0.5cm
\centerline{\includegraphics[angle=0,height=8.0cm,width=12.1cm]{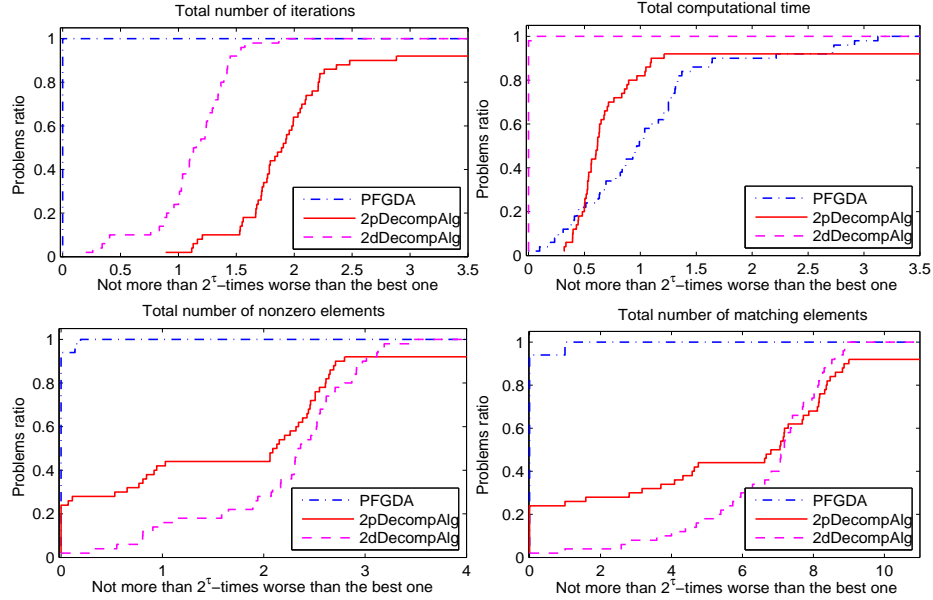}}
\caption{Performance profiles in $\log_2$ scale of three algorithms.}
\label{fig:perf_profile1}
\vskip-0.5cm
\end{figure}
Based on this test, we can make the following observations.
Both algorithms, \texttt{PDGDA} and \texttt{2dDecompAlg}, can solve all the test problems, while \texttt{2pDecompAlg} can only solve $46/50$ ($92\%$) problems.
\texttt{PFGDA} requires a significantly fewer iterations than \texttt{2pDecompAlg} and \texttt{2dDecompAlg}, and it has the best performance on $100\%$
problems in terms of number of iterations. 
\texttt{2dDecompAlg} is the best in terms of computational time where it reaches $100\%$ the test problem with the best performance. 
However, the number of nonzero elements of the obtained solution in \texttt{PFGDA} matches very well the vector of original coefficients $x_0$, while it is
rather bad in \texttt{2pDecompAlg} and \texttt{2dDecompAlg} as we can see from the last figure.
In other words, \texttt{2dDecompAlg} is not good at finding a sparse solution in this example.

%%%%%%%%%%%%%%%%%%%%%%%%%%%%%%%%%%%%%%%%%%%%%%%%%%%%%%%%%%%%%%%%%%%%%%%%%%%%%%%%%
%+ 6. Concluding remarks.
%%%%%%%%%%%%%%%%%%%%%%%%%%%%%%%%%%%%%%%%%%%%%%%%%%%%%%%%%%%%%%%%%%%%%%%%%%%%%%%%%
\section{Concluding remarks}\label{sec:conclusion}
In this paper  we have  proposed two new dual gradient-based decomposition algorithms for solving large-scale separable convex optimization problems. We
have analyzed the convergence of these to schemes  and derived the rate of convergence. 
The first property of these methods is that they can handle general convex objective functions. Therefore, they can be applied to a wide range of
applications compared to second order methods. Second, the new algorithms can implemented in parallel and all the algorithmic parameters are updated
automatically without using any tuning strategy.%% as in augmented Lagrangian-based approaches. 
Third, the convergence rate of Algorithm \ref{alg:fast_grad_alg} is $O(1/k)$ which is optimal in the dual decomposition framework. 
Finally, the complexity estimates of the algorithms do not depend on the diameter of the feasible set as in proximal-type methods, they only depend on the
parameter of the barrier functions. 

%%%%%%%%%%%%%%%%%%%%%%%%%%%%%%%%%%%%%%%%%%%%%%%%%%%%%%%%%%%%%%%%%%%%%%%%%%%%%%%%%
%+ Acknowledgments.
%%%%%%%%%%%%%%%%%%%%%%%%%%%%%%%%%%%%%%%%%%%%%%%%%%%%%%%%%%%%%%%%%%%%%%%%%%%%%%%%%
\begin{acknowledgements}
% % %If you'd like to thank anyone, place your comments here
% % %and remove the percent signs.
This research was supported by Research Council KUL: PFV/10/002 Optimization in Engineering Center OPTEC, GOA/10/09 MaNet; Flemish Government:
IOF/KP/SCORES4CHEM, FWO: PhD/postdoc grants and projects: G.0320.08 (convex MPC), G.0377.09 (Mechatronics MPC); IWT: PhD Grants, projects: SBO LeCoPro; Belgian
Federal Science Policy Office: IUAP P7 (DYSCO, Dynamical systems, control and optimization, 2012-2017); EU: FP7-EMBOCON (ICT-248940), FP7-SADCO ( MC
ITN-264735), ERC ST HIGHWIND (259 166), Eurostars SMART, ACCM; the European Union, Seventh Framework Programme (FP7/2007--2013), EMBOCON, under grant agreement
no 248940; CNCS-UEFISCDI (project TE, no. 19/11.08.2010); ANCS (project PN II, no. 80EU/2010); Sectoral Operational Programme Human Resources Development
2007-2013 of the Romanian Ministry of Labor, Family and Social Protection through the Financial Agreements POSDRU/89/1.5/S/62557.
\end{acknowledgements}

% BibTeX users please use one of
% \bibliographystyle{spbasic}      % basic style, author-year citations
\bibliographystyle{spmpsci}      % mathematics and physical sciences
%\bibliographystyle{spphys}       % APS-like style for physics
%\bibliography{}   % name your BibTeX data base

% Non-BibTeX users please use
%%%%%%%%%%%%%%%%%%%%%%%%%%%%%%%%%%%%%%%%%%%%%%%%%%%%%%%%%%%%%%%%%%%%%%%%%%%%%%%%%
%+ Bibliographies.
%%%%%%%%%%%%%%%%%%%%%%%%%%%%%%%%%%%%%%%%%%%%%%%%%%%%%%%%%%%%%%%%%%%%%%%%%%%%%%%%%
% \bibliography{/mnt/data/QUOC_DATA/PhDProgam/OPTEC_Document/OPTEC_Bib/bibtex/optec}
% \end{document}

\end{document}